\documentclass[12pt,a4paper]{amsart}
\usepackage[latin1]{inputenc}
\usepackage{graphicx}
\newcommand{\dens}{\operatorname{dens}}
\newcommand{\Id}{\mathrm{Id}}
\usepackage{amssymb, amsmath}
\usepackage{geometry}
\usepackage{enumerate}
\usepackage[colorlinks=true,linkcolor=blue,urlcolor=blue,citecolor=blue]{hyperref}
\usepackage[all]{xy}
\usepackage{tikz-cd}
\usepackage{comment}

\newtheorem{theorem}{Theorem}[section]
\newtheorem{definition}[theorem]{Definition}
\newtheorem{lemma}[theorem]{Lemma}
\newtheorem{corollary}[theorem]{Corollary}
\newtheorem{proposition}[theorem]{Proposition}

\theoremstyle{definition}
\newtheorem{example}[theorem]{Example}
\newtheorem{remark}[theorem]{Remark}

\newcommand{\FBL}[1]{\mathrm{FBL}\langle #1\rangle}  % Using \langle and \rangle
 
\begin{document}
\setcounter{tocdepth}{1}

\title{On the free Banach lattice generated by a lattice}

\author[A. Ben Rjeb]{Asma~Ben Rjeb}
\address{Preparatory institute for engineering studies Nabeul\\
Research Laboratory of Algebra, Topology, Arithmetic, and Order\\
Department of Mathematics\\
Faculty of Mathematical, Physical and Natural Sciences of Tunis\\
Tunis-El Manar University, 2092-El Manar, Tunisia}
\email{asmabenrjab1992@gmail.com }

\author[P. Tradacete]{Pedro~Tradacete}
\address{Instituto de Ciencias Matem\'aticas (CSIC-UAM-UC3M-UCM)\\
Consejo Superior de Investigaciones Cient\'ificas\\
C/ Nicol\'as Cabrera, 13--15, Campus de Cantoblanco UAM\\
28049 Madrid, Spain.}
\email{pedro.tradacete@icmat.es}

\begin{abstract}
We study structural properties of the free Banach lattice $\FBL L$
generated by a distributive lattice $L$. We characterize when $\FBL L$
has a strong unit, compute its density character, analyze the density character
of order intervals and study when is $FVL\langle L\rangle$ order dense in $\FBL L$. We also study projection bands, quasi-interior points,
and Banach lattice homomorphisms induced by lattice homomorphisms. Finally,
we show that $\FBL L$ is lattice isometric to $\FBL L^{\mathrm{op}}$, where $L^{\mathrm{op}}$ denotes the opposite lattice.
\end{abstract}

\subjclass[2020]{46B42, 06B05}

\keywords{Free Banach lattice; distributive lattice; strong unit; strong Nakano property; density character}

\maketitle

\section{Introduction}

The construction of free Banach lattices has become a useful tool for
studying the interaction between universal properties, Banach space versus lattice
properties, and order structure. Quite some time after the introduction of free vector lattices in \cite{B1968}, the first appearance of free Banach lattices is due to
de Pagter and Wickstead who introduced the free Banach lattice $\mathrm{FBL}(A)$
associated with a set $A$ \cite{DPW2015}: $\mathrm{FBL}(A)$ is a Banach lattice together with a bounded map $\delta:A\rightarrow \mathrm{FBL}(A)$ such that for every Banach lattice $X$ and bounded map $T:A\rightarrow X$, there is a unique Banach lattice homomorphism $\widehat T:\mathrm{FBL}(A)\rightarrow X$ making the following diagram commutative
\[ \begin{tikzcd}
            FBL(A)  \arrow[rd, "\widehat{T}"] & \\
            A \arrow[r, "T"] \arrow[u, "\delta"] & X
        \end{tikzcd}
\]
and such that $\|\widehat T\|=\sup_{a\in A}\|Ta\|$.

Avil\'es, Rodr\'iguez, and the second author
subsequently constructed the free Banach lattice $\mathrm{FBL}[E]$ generated
by a Banach space $E$ \cite{ART2018}, which generalizes the above construction, allowing to identify $\mathrm{FBL}(A)$ 
lattice isometrically with $\mathrm{FBL}[\ell_1(A)]$. In recent years, these objects have received considerable attention in a number of different research directions, including: projectivity \cite{AMCRA2020}, chain conditions \cite{APRA2018}, convexity \cite{JSLTTT2022}, octahedrality \cite{DMCRARZ2021}, approximation properties \cite{O2024}, complex Banach lattices \cite{DHT2023}, duality \cite{GST2024}, upper $p$-estimates \cite{GSLTT2025}, quasi-Banach spaces \cite{SATTA2025}, $f$-algebras \cite{MLT2025}... We refer to the monograph \cite{OTTT2024} for further developements.
 
It is of course natural to consider how this construction changes when the generating object
already carries an order structure. For a distributive lattice $L$, Avil\'es and
Rodr\'iguez Abell\'an introduced the free Banach lattice $\FBL L$ generated
by $L$ \cite{ARA2019}. This is characterized by a bounded lattice homomorphism
$\delta_L:L\to\FBL L$ with the following universal property: for every
Banach lattice $X$ and every bounded lattice homomorphism $T:L\to X$, there
is a unique Banach lattice homomorphism $\widehat T:\FBL L\to X$ satisfying
$\widehat T\delta_L=T$ and $\|\widehat T\|=\sup_{x\in L}\|Tx\|$. See also \cite{DJJ2026} for free Banach lattices generated by ordered Banach spaces.

Several structural aspects of $\FBL L$ have been studied. In particular, Avil\'es and Rodr\'iguez Abell\'an \cite{ARA2019}  gave a concrete description of $\FBL L$ as a Banach lattice of real-valued functions on the set $L^*$ of all lattice homomorphisms from $L$ to $[-1,1]$. Later, in \cite{AMCRARZ2022b}, they analyzed the behavior of the canonical homomorphism induced by a lattice embedding $i : L \hookrightarrow M$, proving that the induced operator
\[
\overline{i} : \FBL L \longrightarrow \FBL M
\]
is injective if and only if every lattice homomorphism on $L$ extends to $M$, and that it is isometric when $L$ is locally complemented in $M$. They also obtained several projectivity results and the surprising fact that $\FBL L$ is lattice isomorphic to an AM-space (see also \cite{AMCRARZ2022a,ARA2019b}).

The purpose of this paper is to continue the investigation of the structural properties of $\FBL L$ by exploiting its functional representation. Our main contributions are as follows. We first characterize exactly when $\FBL L$ admits a strong unit: this happens if and only if $L$ possesses both a maximum and a minimum (Theorem~\ref{thm:strongunit}). As a consequence, we determine when $\FBL L$ has the strong Nakano property. We then compute the density character of $\FBL L$, showing that $\dens(\FBL L)=\max\{\aleph_0,|L|\}$ (Theorem~\ref{thm:density}); in particular, $\FBL L$ is separable precisely when $L$ is countable. Turning to order intervals, we demonstrate that, unlike the case of $\mathrm{FBL}[E]$, order intervals in $\FBL L$ can have very different density characters. We find a connection between order density of the vector sublattice $FVL\langle L\rangle$ in $\FBL L$ in terms of countable order bounded lattices (Theorem \ref{thm:order dense cob}). We also prove that $\FBL L$ possesses no non-trivial projection bands whenever $|L|>1$ (Theorem \ref{thm:no proj bands}), and we characterize the existence of a quasi-interior point in terms of a countable separating subset of $L$ (Theorem~\ref{thm:quasi-interior-fbl}). 

Furthermore, we study Banach lattice homomorphisms between free Banach lattices over lattices. We show that the canonical extension $\overline{T}:\FBL L\to\FBL M$ of a lattice homomorphism $T:L\to M$ is given by composition with the dual map, and we prove that surjectivity of $T$ implies surjectivity of $\overline{T}$ (Theorem~\ref{thm:onto}). Finally, we show that $\FBL L$ is always lattice isometric to $\FBL{L^{\mathrm{op}}}$ (Proposition~\ref{prop:dual-isometry}). These results provide a comprehensive structural picture of the free Banach lattice generated by a lattice.

\section{Preliminaries}
\subsection{Lattices and lattice homomorphisms}

We recall the order-theoretic notation and the construction of the free
Banach lattice generated by a lattice. All lattices considered below are
assumed to be non-empty. When the functional representation of $\FBL L$ is
used, $L$ is assumed to be distributive.

\begin{definition}[Lattice]
A \emph{lattice} is a set $L$ equipped with a partial order $\le$ such that
every pair $x,y\in L$ admits a supremum
\[
x\vee y=\sup\{x,y\}
\]
and an infimum
\[
x\wedge y=\inf\{x,y\}.
\]

\end{definition}

\begin{definition}[Distributive lattice]
A lattice $(L,\vee,\wedge)$ is called \emph{distributive} if for all
$x,y,z\in L$ one has
\[
x\wedge (y\vee z)=(x\wedge y)\vee (x\wedge z),
\]
and, equivalently,
\[
x\vee (y\wedge z)=(x\vee y)\wedge (x\vee z).
\]

\end{definition}

\begin{definition}[Lattice homomorphism]
Let $L_1$ and $L_2$ be lattices. A map $\phi:L_1\to L_2$ is a
\emph{lattice homomorphism} if
\[
\phi(x\vee y)=\phi(x)\vee\phi(y), \qquad
\phi(x\wedge y)=\phi(x)\wedge\phi(y),
\qquad x,y\in L_1.
\]
\end{definition}

For a subset $A$ of a Banach lattice, we denote by $lat(A)$ the smallest
vector sublattice containing $A$, that is, the set of all lattice-linear
combinations of elements of $A$.

\subsection{Construction of $\FBL L$}

Let $L$ be a distributive lattice. We recall the main constructions of the
free Banach lattice generated by $L$, introduced in \cite{ARA2019} and further
developed in \cite{AMCRARZ2022a,AMCRARZ2022b}.

\begin{definition}[Free Banach lattice generated by a lattice]
The \emph{free Banach lattice generated by $L$} is a Banach lattice $\FBL L$
together with a bounded lattice homomorphism $\delta_L:L\to\FBL L$ satisfying
the following universal property: for every Banach lattice $X$ and every
bounded lattice homomorphism $T:L\to X$, there exists a unique Banach lattice
homomorphism $\widehat T:\FBL L\to X$ such that
$\widehat T\circ\delta_L=T$ and
$\|\widehat T\|=\sup_{x\in L}\|Tx\|$.
\end{definition}

The universal property determines $\FBL L$ uniquely up to canonical lattice
isometric isomorphism; see \cite[Section~2]{ARA2019}.

Let $L^*$ denote the set of all lattice homomorphisms 
$x^* : L \to [-1,1]$.  
For each $x\in L$ define
\[
\delta_x(x^*)=x^*(x), \qquad x^*\in L^*.
\]
A function $f:L^*\to\mathbb R$ is called \emph{positively homogeneous} if
$f(\lambda x^*)=\lambda f(x^*)$ for every $x^*\in L^*$ and every
$\lambda\ge0$ such that $\lambda x^*\in L^*$, where scalar multiplication is
understood pointwise. For such a function, let
\begin{equation}\label{eq:FBL-L-norm}
\|f\|
=\sup\left\{
\sum_{i=1}^m |f(x^*_i)|:
m\in\mathbb N,\ x^*_1,\dots,x^*_m\in L^*,\
\sup_{x\in L}\sum_{i=1}^m |x^*_i(x)|\le 1
\right\}.
\end{equation}

For $x\in L$, the evaluation map $\delta_x:L^*\to\mathbb R$ given by
$\delta_x(x^*)=x^*(x)$ is positively homogeneous and satisfies
$\|\delta_x\|\le 1$.

Consider the Banach lattice generated by $\{\delta_x:x\in L\}$ inside
the space of all positively homogeneous functions $f\in\mathbb R^{L^*}$ with
$\|f\|<\infty$, endowed with this norm and the pointwise lattice operations.
Together with the assignment $\delta_L(x)=\delta_x$, this Banach lattice has
the universal property above. Thus $\FBL L$ can be represented as a Banach
lattice of functions on the dual object $L^*$.

We also recall an alternative description using a quotient of a free Banach lattice generated by a set: Forget the lattice structure of
$L$ temporarily and regard it as a set. Let $\mathrm{FBL}(L)$ be the free
Banach lattice generated by this set, with canonical map
$u:L\to\mathrm{FBL}(L)$, as in \cite{DPW2015}. Let $I$ be the closed ideal of
$\mathrm{FBL}(L)$ generated by
\[
u(x)\vee u(y)-u(x\vee y), \qquad
u(x)\wedge u(y)-u(x\wedge y),
\qquad x,y\in L .
\]
These relations force the image of $L$ to respect the lattice operations. If
$q:\mathrm{FBL}(L)\to\mathrm{FBL}(L)/I$ denotes the quotient map, then
$\delta_L:=q\circ u$ is a lattice homomorphism, and
$\mathrm{FBL}(L)/I$ has the universal property defining $\FBL L$.

It should be noted that although $\FBL L$ can be actually defined even when $L$ is not distributive, in that case, the $\delta$ map is not injective. In fact, it is shown in \cite{ARA2019} that one can replace $L$ with $L'=\delta(L)$ and identify $\FBL L$ with $\FBL{L'}$. Thus, for our purposes and throughout this paper we will assume all lattices are distributive unless stated otherwise.

\section{Strong units in $\FBL L$}

By \cite[Lemma~2.5]{AMCRARZ2022a}, if $L$ has a maximum $M_L$ and a minimum $m_L$,
then $|\delta_{m_L}|\vee|\delta_{M_L}|$ is a strong unit for $\FBL L$.
Moreover, \cite[Theorem~2.7]{AMCRARZ2022a} identifies $\FBL L$, up to an equivalent
norm, with $C(K_L)$, where
\[
K_L=\{x^*\in L^*:\max\{|x^*(m_L)|,|x^*(M_L)|\}=1\}.
\]
We prove that the existence of both a maximum and a minimum is also necessary
for $\FBL L$ to have a strong unit. This is analogous to
\cite[Proposition~9.1]{OTTT2024}, where $\mathrm{FBL}[E]$ is shown to have a
strong unit precisely when $E$ is finite-dimensional.

\begin{lemma}\label{lem:strong-separation}
Let $L$ be a distributive lattice and let $y\nleq x$ in $L$. Then there exists
$x^*\in L^*$ such that $x^*(x)=0$, $x^*(y)=1$, and
$0\le x^*(z)\le 1$ for every $z\in L$.
\end{lemma}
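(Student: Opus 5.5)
The approach is the classical prime filter separation theorem for distributive lattices, followed by taking the indicator function of the resulting filter. Since $y\nleq x$, the principal filter ${\uparrow}y=\{z\in L: z\ge y\}$ and the principal ideal ${\downarrow}x=\{z\in L: z\le x\}$ are disjoint. Indeed, a common element $z$ would satisfy $y\le z\le x$, which is impossible.

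The first step is to find a prime filter $F\subseteq L$ with $y\in F$ and $x\notin F$. To do this, consider the family of all filters of $L$ that contain $y$ and are disjoint from ${\downarrow}x$, ordered by inclusion. This family is nonempty, since it contains ${\uparrow}y$, and it is closed under unions of chains. By Zorn's lemma it has a maximal element $F$. We then check that $F$ is prime, i.e.\ that $a\vee b\in F$ implies $a\in F$ or $b\in F$, and this is where distributivity enters.

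Suppose $a,b\notin F$ while $a\vee b\in F$. By maximality, the filter generated by $F\cup\{a\}$, namely $\{z: z\ge f\wedge a \text{ for some } f\in F\}$, meets ${\downarrow}x$. So there is $f_1\in F$ with $f_1\wedge a\le x$, and likewise there is $f_2\in F$ with $f_2\wedge b\le x$. Put $f=f_1\wedge f_2\wedge(a\vee b)\in F$. Distributivity gives
\[
f=(f\wedge a)\vee(f\wedge b)\le x,
\]
so $f\in F\cap{\downarrow}x$, a contradiction. This primeness step is the only substantive point of the proof, and it is where the hypothesis that $L$ is distributive is used. Alternatively, one can cite the standard Birkhoff--Stone prime filter theorem.

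Finally, define $x^*=\mathbf 1_F$, that is, $x^*(z)=1$ if $z\in F$ and $x^*(z)=0$ otherwise. This map is a lattice homomorphism $L\to\{0,1\}\subseteq[-1,1]$:
\begin{itemize}
\item $z\wedge w\in F$ if and only if $z\in F$ and $w\in F$, because $F$ is a filter;
\item $z\vee w\in F$ if and only if $z\in F$ or $w\in F$, because $F$ is an up-set and is prime.
\end{itemize}
Hence $x^*\in L^*$, with $x^*(y)=1$, $x^*(x)=0$, and $0\le x^*\le 1$ on all of $L$, as required.
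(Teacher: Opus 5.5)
Your proof is correct and complete, but it takes a different route from the paper. The paper does not build the homomorphism from scratch. It cites the injectivity of $\delta\colon L\to\FBL L$ for distributive $L$ (\cite[Proposition~3.1]{ARA2019}) to get some $x_0^*\in L^*$ with $x_0^*(x)<x_0^*(y)$. It then post-composes with the monotone maps $t\mapsto \alpha\vee(t\wedge\beta)$ and $t\mapsto (t-\alpha)/(\beta-\alpha)$, where $\alpha=x_0^*(x)$ and $\beta=x_0^*(y)$. Since these are lattice homomorphisms of $\mathbb R$, the composite stays in $L^*$ and has the required normalization.

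You instead prove the prime filter separation theorem directly, via Zorn's lemma and the distributivity step $f=(f\wedge a)\vee(f\wedge b)\le x$, which you verify correctly, and then take $x^*=\mathbf 1_F$. Your route is self-contained: in effect you prove the cited injectivity result rather than quoting it. It also gives the stronger conclusion that $x^*$ can be taken $\{0,1\}$-valued.

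Your route also avoids a small step the paper leaves implicit. Injectivity of $\delta$ only gives $x_0^*(x)\neq x_0^*(y)$. When $x$ and $y$ are incomparable, getting the strict inequality in the right direction requires applying injectivity to the pair $x\wedge y<y$ and using $x_0^*(x\wedge y)=\min\{x_0^*(x),x_0^*(y)\}$.

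The paper's route, in turn, is shorter given the literature, and it shows the truncation-and-rescaling trick for elements of $L^*$.
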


\begin{proof}
Since $L$ is distributive, \cite[Proposition~3.1]{ARA2019} implies that
$\delta:L\to\FBL L$ is injective. Hence there is $x_0^*\in L^*$ such that
$x_0^*(x)<x_0^*(y)$. For $z\in L$, define
\[
y^*(z)=x_0^*(x)\vee\bigl(x_0^*(z)\wedge x_0^*(y)\bigr),
\]
and set
\[
x^*(z)=\frac{y^*(z)-y^*(x)}{y^*(y)-y^*(x)}.
\]
Then $x^*$ is a lattice homomorphism satisfying the required conditions.
\end{proof}

\begin{theorem}\label{thm:strongunit}
Let $L$ be a lattice. Then $\FBL L$ has a strong unit if and only if $L$
has a maximum and a minimum.
\end{theorem}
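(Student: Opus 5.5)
The sufficiency is already covered by \cite[Lemma~2.5]{AMCRARZ2022a}: if $L$ has a maximum $M_L$ and a minimum $m_L$, then $|\delta_{m_L}|\vee|\delta_{M_L}|$ is a strong unit. So the plan is to prove necessity. Following the standing convention, $L$ is taken to be distributive, so that Lemma~\ref{lem:strong-separation} applies.

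\emph{Tools.} Two elementary facts will be used repeatedly.
\begin{enumerate}[(a)]
\item Every $f\in\FBL L$ satisfies
\[
|f(x^*)|\le \|f\|\,\sup_{z\in L}|x^*(z)| \qquad (x^*\in L^*).
\]
This follows from \eqref{eq:FBL-L-norm} with $m=1$, after rescaling $x^*$ by positive homogeneity.
\item If $v\in lat\{\delta_z:z\in F\}$ and $x^*$ vanishes on $F$, then $v(x^*)=0$. Indeed, $v(x^*)$ is a lattice-linear expression in the numbers $x^*(z)$, $z\in F$, all of which are $0$.
\end{enumerate}

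\emph{Approximation step.} Let $u\ge 0$ be a strong unit, and assume $L$ has no maximum. For each $\varepsilon>0$ choose a finite $F\subseteq L$ and $v\in lat\{\delta_z:z\in F\}$ with $\|u-v\|<\varepsilon$. By (a), every $x^*\in L^*$ with values in $[0,1]$ satisfies
\[
u(x^*)\le |v(x^*)|+\varepsilon .
\]
Put $M_0=\bigvee F$. Since $M_0$ is not a maximum, there is $y\in L$ with $y\nleq M_0$. Lemma~\ref{lem:strong-separation} then gives $x^*$ with values in $[0,1]$, $x^*(M_0)=0$ and $x^*(y)=1$. Monotonicity forces $x^*\equiv 0$ on every $z\le M_0$, hence on $F$. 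By (b), $v(x^*)=0$, so
\[
u(x^*)\le\varepsilon \quad\text{while}\quad \delta_y(x^*)=1 .
\]

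\emph{Main obstacle.} Applying the strong unit property to $\delta_y$ alone gives nothing useful. The constant $C$ with $|\delta_y|\le Cu$ depends on $y$, and $y$ was chosen only after $\varepsilon$.

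\emph{Diagonal construction.} To get around this, I would build a single test function inductively. Set $\varepsilon_n=4^{-n}$ and construct increasing finite sets $F_n$, elements $y_n$, and homomorphisms $x_n^*$ as follows. Choose $F_n\supseteq F_{n-1}\cup\{y_{n-1}\}$ together with an $\varepsilon_n$-approximation $v_n$ of $u$ from $lat\{\delta_z:z\in F_n\}$. Then pick $y_n\nleq\bigvee F_n$ and $x_n^*$ as above. This yields
\[
u(x_n^*)\le 4^{-n}, \qquad x_n^*(y_n)=1 .
\]
Define
\[
f=\sum_n 2^{-n}|\delta_{y_n}|,
\]
which converges in norm since $\|\delta_{y_n}\|\le1$. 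Then $f(x_n^*)\ge 2^{-n}$. Hence $f\le Cu$ would require $2^{-n}\le C4^{-n}$ for all $n$, which is impossible. So $L$ has a maximum.

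\emph{Minimum.} The argument is symmetric. Set $m_0=\bigwedge F$ and pick $y$ with $m_0\nleq y$. Apply Lemma~\ref{lem:strong-separation} to the pair $(y,m_0)$, obtaining $w^*$ with $w^*(y)=0$, $w^*(m_0)=1$ and values in $[0,1]$. Then $x^*:=w^*-1$ is again a lattice homomorphism into $[-1,0]$. It vanishes on all $z\ge m_0$, hence on $F$, and $x^*(y)=-1$. The same diagonal construction, using $|\delta_{y_n}|$, shows that $L$ has a minimum.
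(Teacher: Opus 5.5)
Your proof is correct, and its core mechanism is the paper's. You approximate the unit by an element of $lat\{\delta_z:z\in F\}$ with $F$ finite, then use Lemma~\ref{lem:strong-separation} to get a functional that vanishes on $F$ but equals $1$ at some $y\nleq\bigvee F$. The two arguments part ways exactly at the obstacle you flagged. The paper asserts $|\delta_x|\le f$ for all $x\in L$ with one fixed $f$. Then, for each $g$ in the generated vector lattice, it finds $z^*$ with $g(z^*)=0$ and $f(z^*)\ge 1$, so $f$ is at distance at least $1$ from that vector lattice. That uniform domination is not part of the definition of a strong unit. It tacitly uses the standard Baire/closed-graph fact that for a strong unit $u$ the order-unit norm $\|\cdot\|_u$ is equivalent to the original norm, whence $|h|\le C\|h\|\,u$ and one may take $f=Cu$. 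Your element $\sum_n 2^{-n}|\delta_{y_n}|$ is a gliding-hump substitute for that fact. It makes the argument self-contained, at the price of a sequence instead of a single $y$. The nesting $F_n\supseteq F_{n-1}\cup\{y_{n-1}\}$ is harmless but unnecessary, since each $n$ can be handled independently. For the minimum, the paper passes to $L^{\mathrm{op}}$ through the isometry of Proposition~\ref{prop:dual-isometry}, which is proved only later; your shifted functional $w^*-1$ does the job directly without that forward reference. Finally, you were right to take distributivity from the standing convention rather than from the paper's reduction to $\delta(L)$, because that reduction can create a maximum. For example, a bottom element followed by an infinite upward stack of copies of the diamond $M_3$, glued top to bottom, has no maximum, yet $\delta(L)$ is a two-element chain. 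So the statement genuinely needs $L$ distributive.
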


\begin{proof}
By \cite[Proposition~3.2]{ARA2019}, we may assume without loss of generality that
$L$ is distributive. If $L$ has a maximum and a minimum, the result follows
from \cite[Lemma~2.5]{AMCRARZ2022a}.

Conversely, suppose that $f\in\FBL L$ is a strong unit. Then, for every
$x\in L$, we have $|\delta_x|\le f$. Let
$g\in lat\{\delta_x:x\in L\}$. Then $g$ is a lattice-linear combination of
$\delta_{x_1},\dots,\delta_{x_n}$ for some $x_1,\dots,x_n\in L$.

Suppose first that $L$ has no maximum. Then there exists
$y\in L$ such that $y>x_1\vee\cdots\vee x_n$. Applying
Lemma~\ref{lem:strong-separation} with
$x=x_1\vee\cdots\vee x_n$, we obtain $z^*\in L^*$ such that
$z^*(y)=1$, $z^*(x)=0$, and $0\le z^*(z)\le 1$ for all $z\in L$.
In particular, $z^*(x_i)=0$ for every $i$, and therefore $g(z^*)=0$.
Hence
\[
\|f-g\| \ge |f(z^*)-g(z^*)| = |f(z^*)| \ge |\delta_y(z^*)| = 1.
\]
Thus $f$ cannot be approximated by elements of $lat\{\delta_x:x\in L\}$,
contradicting the definition of $\FBL L$ as the closed Banach lattice
generated by the functions $\delta_x$.

If $L$ has no minimum, we apply the same argument to the opposite lattice
$L^{\mathrm{op}}$. Then $L^{\mathrm{op}}$ has no maximum, and
Proposition~\ref{prop:dual-isometry} shows that $\FBL L$ is lattice
isometric to $\FBL{L^{\mathrm{op}}}$. Thus a strong unit in $\FBL L$ would
yield a strong unit in $\FBL{L^{\mathrm{op}}}$, contradicting the previous
paragraph. Hence $L$ has both a maximum and a minimum.
\end{proof}

Recall that a Banach lattice $X$ has the \emph{strong Nakano property} if
every upward directed norm-bounded subset of $X$ has a supremum. The strong Nakano property was studied in the context of free Banach lattices generated by a Banach space in \cite{ABRT2025}. For
AM-spaces this property is equivalent to the existence of a strong unit.
The following corollary is therefore an immediate consequence of
Theorem~\ref{thm:strongunit} and the fact that $\FBL L$ is lattice
isomorphic to an AM-space \cite[Theorem~3.9]{AMCRARZ2022b}.

\begin{corollary}
$\FBL L$ has the strong Nakano property if and only if $L$ has a maximum and a minimum.
\end{corollary}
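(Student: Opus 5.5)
The corollary follows by combining Theorem~\ref{thm:strongunit} with the fact, stated just before the corollary, that for AM-spaces the strong Nakano property is equivalent to having a strong unit. The one point to handle is that $\FBL L$ is only lattice \emph{isomorphic} (not isometric) to an AM-space, by \cite[Theorem~3.9]{AMCRARZ2022b}. So I would first check that both properties are invariant under lattice isomorphisms of Banach lattices.

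Let $\Phi:\FBL L\to Y$ be a lattice isomorphism onto an AM-space $Y$. Both $\Phi$ and $\Phi^{-1}$ are bounded, bijective and preserve order.

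\emph{Strong units.} Suppose $e$ is a strong unit of $\FBL L$. For $y\in Y$ there are $\lambda>0$ with $|\Phi^{-1}y|\le\lambda e$. Applying $\Phi$ gives $|y|\le\lambda\Phi e$, so $\Phi e$ is a strong unit of $Y$. The converse is symmetric, using $\Phi^{-1}$.

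\emph{Strong Nakano property.} Let $D\subset \FBL L$ be upward directed and norm bounded. Then $\Phi(D)$ is upward directed, and it is norm bounded because $\|\Phi\|<\infty$. If $Y$ has the strong Nakano property, $\Phi(D)$ has a supremum $s$. Since $\Phi^{-1}$ is an order isomorphism, $\Phi^{-1}s=\sup D$. The reverse direction is symmetric.

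With these invariances, the argument runs as follows:
\[
\FBL L \text{ strong Nakano}\iff Y \text{ strong Nakano}\iff Y \text{ has a strong unit}\iff \FBL L \text{ has a strong unit}.
\]
The last condition holds if and only if $L$ has a maximum and a minimum, by Theorem~\ref{thm:strongunit}.

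The main step I would want to justify, or cite precisely, is the AM-space equivalence itself. One direction is easy: an AM-space with a strong unit is lattice isomorphic to some $C(K)$. In $C(K)$, a norm bounded upward directed set is dominated by a multiple of the unit, and the Nakano-type results for AM-spaces with unit give the supremum. The other direction is harder: if an AM-space lacks a strong unit, one must build a norm bounded increasing net with no supremum. For example, take finite suprema of normalized positive elements; this set is bounded in an AM-space, but its supremum would be a strong unit. Since the paper already cites this equivalence as known, I would invoke it directly rather than reprove it.
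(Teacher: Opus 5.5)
Your argument has the same structure as the paper's: Theorem~\ref{thm:strongunit}, the cited AM-space equivalence, and the lattice isomorphism of $\FBL L$ with an AM-space. Your explicit check that both properties pass through a lattice isomorphism fills in a step the paper uses silently.

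However, your sketch of the ``easy'' direction is false, and it exposes a problem with the definition you used (taken from the paper). In $C(K)$ a norm bounded upward directed set is dominated by a multiple of the unit. That gives only an \emph{upper bound}, not a supremum, and $C(K)$ is Dedekind complete only when $K$ is extremally disconnected.

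In fact, order bounded sets are always norm bounded. So the property ``every norm bounded upward directed set has a supremum'' forces Dedekind completeness: apply it to $\{x_0\vee x_1\vee\cdots\vee x_n : n\in\mathbb N,\ x_i\in B\}$ for $B$ bounded above and $x_0\in B$ fixed. The corollary following Theorem~\ref{thm:no proj bands} shows that $\FBL L$ is not even Dedekind $\sigma$-complete when $|L|\ge 2$. Hence for $L=\{0<1\}$, which has a maximum and a minimum, the statement would fail under the literal definition.

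For the corollary and the cited AM-space equivalence to hold, the notion must be weaker, presumably of upper-bound type (every norm bounded upward directed set is order bounded). With that reading, your two sketches do prove the AM-space equivalence once ``supremum'' is replaced by ``upper bound''. In the hard direction, an upper bound for the finite suprema of normalized positive elements is already a strong unit. Your invariance argument also goes through verbatim, since $\Phi$ and $\Phi^{-1}$ map upper bounds to upper bounds.

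Be aware that demanding an upper bound of the \emph{same} norm would give a property that does not survive non-isometric lattice isomorphisms. For example, renorm $C[0,1]$ by $\max\{\|f\|_\infty,2|f(1/2)|\}$: the positive part of the new unit ball is upward directed, but every upper bound has norm at least $2$. With such a definition, the transfer through the AM-space would break down.
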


\begin{proof}
An AM-space has the strong Nakano property if and only if it has a strong
unit \cite[Proposition~2.1]{ABRT2025}. Since $\FBL L$ is lattice isomorphic to an
AM-space by \cite[Theorem~3.9]{AMCRARZ2022b}, the result follows from
Theorem~\ref{thm:strongunit}.
\end{proof}

\section{Density character in $\FBL L$}

Recall that the density character of a topological space $S$, denoted by
$\dens(S)$, is the least cardinality of a dense subset. For a Banach space
$E$, it is easy to check that $\dens(\mathrm{FBL}[E])=\dens(E)$
\cite[Section~3]{ART2018}. In this section we study the corresponding question
for $\FBL L$ and then discuss the density character of order intervals.

\begin{theorem}\label{thm:density}
For every non-empty lattice $L$, we have
\[
\dens(\FBL L)=\max\{\aleph_0,|L|\}.
\]
\end{theorem}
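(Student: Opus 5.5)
The plan is to prove the upper bound $\dens(\FBL L)\le\max\{\aleph_0,|L|\}$ and the lower bound $\dens(\FBL L)\ge\max\{\aleph_0,|L|\}$ separately. As in the proof of Theorem~\ref{thm:strongunit}, I first use \cite[Proposition~3.2]{ARA2019} to reduce to the case where $L$ is distributive. This is harmless: replacing $L$ by $\delta(L)$ can only shrink $|L|$, and for the lower bound we work with the distributive representative, where $\delta$ is injective by \cite[Proposition~3.1]{ARA2019}.

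\textbf{Upper bound.} $\FBL L$ is the closure of $lat\{\delta_x:x\in L\}$. Every element of this sublattice is obtained from finitely many $\delta_{x_i}$ by finitely many applications of $\vee$, $\wedge$, sums and real scalar multiples. I restrict the scalars to $\mathbb Q$ and call the resulting set $D$.
\begin{itemize}
\item $D$ has cardinality at most $\max\{\aleph_0,|L|\}$: it is a countable union, over lattice-linear formulas, of images of finite powers of $L$.
\item $D$ is dense in $lat\{\delta_x\}$, because lattice-linear expressions depend continuously on their scalar coefficients in norm.
\end{itemize}
Hence $\dens(\FBL L)\le\max\{\aleph_0,|L|\}$.

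\textbf{Lower bound.} There are two parts.
\begin{itemize}
\item $\FBL L$ is infinite-dimensional, so $\dens\ge\aleph_0$. If $L$ is a single point, $\FBL L\cong\mathbb R$ with density $1$. This edge case has to be treated with the convention that a separable space has density $\aleph_0$, or the statement reads as $\le$ there. I will note this and focus on $|L|$ infinite. Then the bound $\aleph_0$ is automatic once we show $\dens\ge|L|$.
\item It suffices to find a uniformly separated family $\{\delta_x\}_{x\in L}$, or a separated family of size $|L|$. For $y\nleq x$, Lemma~\ref{lem:strong-separation} gives $x^*\in L^*$ with $x^*(x)=0$, $x^*(y)=1$ and $0\le x^*\le1$. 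Since $\sup_z|x^*(z)|\le1$, the single functional $x^*$ is admissible in \eqref{eq:FBL-L-norm}, so $\|\delta_y-\delta_x\|\ge|x^*(y)-x^*(x)|=1$.
\end{itemize}
For distinct $x,y$ we have $y\nleq x$ or $x\nleq y$, so in either case $\|\delta_x-\delta_y\|\ge1$. Thus $\{\delta_x:x\in L\}$ is $1$-separated of cardinality $|L|$. Any dense set must then contain a distinct point in each open ball of radius $1/2$ around the $\delta_x$, giving $\dens\ge|L|$.

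The main care point is the bookkeeping in the reduction to distributive $L$. If $L$ is non-distributive, $|\delta(L)|$ may be smaller than $|L|$, which makes the stated formula false in general. So the statement should be read under the paper's standing distributivity assumption, and I would make this explicit. Otherwise the separation via Lemma~\ref{lem:strong-separation} is the key step, and it is straightforward.
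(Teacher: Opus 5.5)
Your argument is correct in substance. The lower bound $\dens(\FBL L)\ge|L|$ is the same as the paper's: both use Lemma~\ref{lem:strong-separation} to show that $\{\delta_x:x\in L\}$ is $1$-separated. The paper applies the lemma to the pair $x\wedge y<x$ and then deduces $x^*(y)=0$ from $x^*(x\wedge y)=0$; your direct use of $y\nleq x$ is slightly shorter.

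The upper bound is where you take a different route. The paper realises $\FBL L$ as a quotient of $\mathrm{FBL}(L)$ and quotes $\dens(\mathrm{FBL}(L))=\max\{\aleph_0,|L|\}$ from \cite{DPW2015}. You instead count rational lattice-linear expressions in the $\delta_x$ directly. Your version is self-contained: it uses only that $\FBL L$ is the closure of $lat\{\delta_x:x\in L\}$ and that the vector and lattice operations are norm continuous. The paper's version is shorter but depends on the quotient description and on a cited density computation, which itself rests on the same kind of counting.

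There are two points about edge cases.

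First, your handling of $\dens\ge\aleph_0$ is wrong for $|L|=1$, and the detour is unnecessary in general. $\FBL{\{p\}}$ is not $\mathbb R$: it coincides with $\mathrm{FBL}(\{p\})$, which is two-dimensional and spanned by $\delta_p^+$ and $\delta_p^-$. More importantly, no non-zero real normed space has a finite dense subset, since finite sets are closed. So its density character is at least $\aleph_0$, and never $1$. This is exactly the paper's observation that the line $\{t\delta_x:t\in\mathbb R\}$ lies in $\FBL L$. Hence there is no exceptional case and no convention is needed. You also do not need the claim, which you did not prove, that $\FBL L$ is infinite-dimensional when $2\le|L|<\infty$.

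Second, your caveat about distributivity is well founded, and here you are more careful than the paper. Take $L$ to be $\{0,1\}$ together with $\kappa\ge3$ pairwise incomparable atoms $a_i$ satisfying $a_i\wedge a_j=0$ and $a_i\vee a_j=1$ for $i\ne j$. Every lattice homomorphism $L\to[-1,1]$ is then constant, because it would have to send any two atoms onto the two-point set $\{x^*(0),x^*(1)\}$. So all $\delta_x$ coincide, $\FBL L$ is two-dimensional, and $\dens(\FBL L)=\aleph_0<|L|$ for uncountable $\kappa$. Replacing $L$ by $\delta(L)$ via \cite{ARA2019} yields only the upper bound. The statement must therefore be read under the paper's standing distributivity assumption, not literally ``for every non-empty lattice''.
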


\begin{proof}
By \cite[Section~3]{ARA2019}, we may assume without loss of generality that $L$
is distributive. Given $x\in L$, the line
$\{t\delta_x:t\in\mathbb R\}$ is contained in $\FBL L$, and hence
$\dens(\FBL L)\ge\aleph_0$.

Also, if $x\ne y$ in $L$, then, after interchanging $x$ and $y$ if
necessary, we may assume that $x\wedge y<x$. By
Lemma~\ref{lem:strong-separation}, there exists $x^*\in L^*$ such that
$x^*(x\wedge y)=0$, $x^*(x)=1$, and $0\le x^*(z)\le 1$ for every $z\in L$.
Since $x^*$ is a lattice homomorphism, this gives $x^*(y)=0$, and therefore
\[
\|\delta_x - \delta_y\| \geq |x^*(x) - x^*(y)| = 1 > 0.
\]
Thus $\{\delta_x:x\in L\}$ is a $1$-separated subset of $\FBL L$, and so
\[
\dens(\FBL L)\ge |L|.
\]

Conversely, $\FBL L$ is the quotient of $\mathrm{FBL}(L)$ by the closed ideal
generated by
\[
\{u(x)\vee u(y)-u(x\vee y),\ u(x)\wedge u(y)-u(x\wedge y):x,y\in L\},
\]
where $u:L\to\mathrm{FBL}(L)$ is the canonical embedding; see
\cite[Section~2]{ARA2019}. Hence
\[
\dens(\FBL L)\le \dens(\mathrm{FBL}(L))
=\max\{\aleph_0,|L|\}.
\]
The equality $\dens(\mathrm{FBL}(L))=\max\{\aleph_0,|L|\}$ follows from
\cite[Section~8]{DPW2015}.
\end{proof}

\begin{remark}
In particular, $\FBL L$ is separable if and only if $L$ is countable.
\end{remark}

In the free Banach lattice generated by a Banach space, all order intervals
have the same density character \cite[Section~3]{ART2018}. As we will see, the
situation in $\FBL L$ is more involved. For $a\le b$ in $L$, write
\[
[a,b]_L=\{x\in L:a\le x\le b\}.
\]

\begin{proposition}\label{prop:interval-density}
Let $L$ be a distributive lattice and let $a,b\in L$ with $a<b$. Then
\[
\max\{\aleph_0,|[a,b]_L|\}\le
\dens\big([\delta(a),\delta(b)]_{\FBL L}\big)
\le \max\{\aleph_0,|L|\}.
\]
\end{proposition}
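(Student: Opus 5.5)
The upper bound is immediate: the order interval $[\delta(a),\delta(b)]_{\FBL L}$ is a subset of the metric space $\FBL L$. Density character is monotone for subspaces of metric spaces, because a metric space of density $\kappa$ has weight $\kappa$ and weight passes to subspaces. So Theorem~\ref{thm:density} gives $\dens([\delta(a),\delta(b)])\le\dens(\FBL L)=\max\{\aleph_0,|L|\}$.

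For the lower bound I will produce two families inside the interval: a norm-continuum, and a uniformly separated family indexed by $[a,b]_L$.

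For the $\aleph_0$ part, I will use the segment $\{(1-t)\delta(a)+t\delta(b):t\in[0,1]\}$. Since $a\le b$ and $\delta$ is a lattice homomorphism, $\delta(a)\le\delta(b)$, so every such convex combination lies in the interval. The map $t\mapsto(1-t)\delta(a)+t\delta(b)$ is injective, because $\delta(a)\neq\delta(b)$ by injectivity of $\delta$ (\cite[Proposition~3.1]{ARA2019}). Its image is therefore homeomorphic to $[0,1]$, and the interval is an infinite, indeed uncountable, metric space, so its density is at least $\aleph_0$. Alternatively, I can pick $x^*$ from Lemma~\ref{lem:strong-separation} with $x^*(a)=0$ and $x^*(b)=1$; evaluating at $x^*$ shows the points for distinct $t$ are distinct.

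For the $|[a,b]_L|$ part, every $x\in[a,b]_L$ satisfies $a\le x\le b$, hence $\delta(a)\le\delta(x)\le\delta(b)$, so $\{\delta(x):x\in[a,b]_L\}$ lies in the interval. The argument in the proof of Theorem~\ref{thm:density}, which combines Lemma~\ref{lem:strong-separation} with the fact that elements of $L^*$ are lattice homomorphisms, shows $\|\delta_x-\delta_y\|\ge1$ for $x\ne y$. A $1$-separated family of cardinality $\kappa$ in a metric space forces its density to be at least $\kappa$: each open ball of radius $1/2$ around a point of the family must contain a distinct element of any dense set, using again monotonicity of density for subspaces. This gives $\dens\ge|[a,b]_L|$, and combined with the previous paragraph, $\dens\ge\max\{\aleph_0,|[a,b]_L|\}$.

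There is no real obstacle here. The only points needing care are that $\delta$ preserves order, so that the chosen elements actually lie in the order interval, and the standard fact that density character does not increase when passing to subsets of a metric space.
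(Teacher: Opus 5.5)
Your proof is correct and follows the paper's argument essentially step for step. You use the segment $\delta(a)+t(\delta(b)-\delta(a))$ for the $\aleph_0$ bound, the $1$-separated family $\{\delta(x):x\in[a,b]_L\}$ from Lemma~\ref{lem:strong-separation} for the $|[a,b]_L|$ bound, and Theorem~\ref{thm:density} together with monotonicity of density for the upper bound; your remark that $\delta(a)\neq\delta(b)$, so the segment is non-degenerate, is a detail the paper leaves implicit.
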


\begin{proof}
Since $\delta(a)+t(\delta(b)-\delta(a))\in[\delta(a),\delta(b)]_{\FBL L}$
for every $t\in[0,1]$, we have
$\dens([\delta(a),\delta(b)]_{\FBL L})\ge\aleph_0$. Also, for distinct
$x,y\in[a,b]_L$, Lemma~\ref{lem:strong-separation} gives
$x^*\in L^*$ such that $x^*(x)=0$ and $x^*(y)=1$, or vice versa. Hence
$$
\|\delta(x)-\delta(y)\|\geq|x^*(x)-x^*(y)|\geq1.
$$
Since $\delta(x),\delta(y)\in[\delta(a),\delta(b)]_{\FBL L}$, the set
$\{\delta(x):x\in[a,b]_L\}$ is $1$-separated in the interval. Therefore
\[
\dens\big([\delta(a),\delta(b)]_{\FBL L}\big)\ge |[a,b]_L|.
\]

The upper estimate follows from
\[
\dens\big([\delta(a),\delta(b)]_{\FBL L}\big)\le\dens(\FBL L)
=\max\{\aleph_0,|L|\},
\]
using Theorem~\ref{thm:density}.
\end{proof}

The preceding estimate does not imply that every interval must have large
density when $|L|$ is large, as the following example shows.

\begin{example}\label{ex:separable interval}
Let $\Gamma$ be a set and let $L=\mathcal P(\Gamma)$, with
$\wedge=\cap$ and $\vee=\cup$. Fix $\gamma_0\in\Gamma$ and put
$a=\emptyset$ and $b=\{\gamma_0\}$. We claim that
$[\delta(a),\delta(b)]_{\FBL L}$ is separable.

Define
\[
U=\{x^*\in L^*:x^*(b)>x^*(a)\},
\]
and consider the space $X_U$ of real-valued continuous functions on $U$,
equipped with the norm
\[
\|f\|=\sup\left\{\sum_{i=1}^m |f(x^*_i)|:
m\in\mathbb N,\ x^*_1,\dots,x^*_m\in U,\
\sup_{x\in L}\sum_{i=1}^m |x^*_i(x)|\le 1
\right\}.
\]

Let $r:\FBL L\to X_U$ be the restriction map to $U$. Then $r$ is a lattice
homomorphism and is isometric on $[\delta(a),\delta(b)]$: if
$f\in[\delta(a),\delta(b)]$ and $x^*\notin U$, then
$f(x^*)=x^*(a)=x^*(b)$.

For $A\in\mathcal P(\Gamma)$, if $\gamma_0\notin A$, then
$r(\delta(A))=r(\delta(a))$, while if $\gamma_0\in A$, then
$r(\delta(A))=r(\delta(b))$.

Thus $r[\delta(a),\delta(b)]$ is contained in the separable sublattice
generated by $r(\delta(a))$ and $r(\delta(b))$. Since $r$ is isometric on the
interval, $[\delta(a),\delta(b)]$ is separable.
\end{example}

\begin{remark}\label{rem:interval-density-character}
Thus order intervals in $\FBL L$ need not have the same density character.
Indeed, let $\Gamma$ be infinite and let $L=\mathcal P(\Gamma)$ as above.
Then
\[
\dens([\delta(\emptyset),\delta(\{\gamma_0\})])=\aleph_0.
\]
On the other hand, for every infinite $A\subset\Gamma$,
\[
\dens([\delta(\emptyset),\delta(A)])\ge |[\emptyset,A]_L|=2^{|A|}>\aleph_0.
\]
\end{remark}

\begin{remark}
In general, there is no upper bound on $\dens([\delta(a),\delta(b)])$
depending only on $|[a,b]_L|$. To see this, fix an infinite cardinal
$\kappa$ and choose a set $\Gamma$ with $|\Gamma|=\kappa$.
Let
\[
L=\{\hat 0\}\cup\mathcal P(\Gamma),
\]
ordered so that $\hat 0$ is a new minimum below every element of
$\mathcal P(\Gamma)$, while on $\mathcal P(\Gamma)$ we use the usual
operations $\wedge=\cap$ and $\vee=\cup$. Then $L$ is a distributive lattice.

Define
\[
a=\hat 0,\qquad b=\emptyset\in\mathcal P(\Gamma).
\]
Then $a<b$ and
\[
|[a,b]_L|=|\{\hat 0,\emptyset\}|=2.
\]

For each $\gamma\in\Gamma$, define $x^*_\gamma\in L^*$ by
\[
x^*_\gamma(\hat 0)=-1,\qquad
x^*_\gamma(A)=\chi_A(\gamma)\quad(A\subseteq\Gamma).
\]

For each $\gamma\in\Gamma$, consider
\[
f_\gamma=
\bigl((\delta(a)+\delta(\{\gamma\}))\vee\delta(a)\bigr)\wedge\delta(b)
\in[\delta(a),\delta(b)]\subset\FBL L.
\]
Note that
\[
\delta(a)(x^*_\gamma)=x^*_\gamma(\hat 0)=-1,\qquad \delta(b)(x^*_\gamma)=x^*_\gamma(\emptyset)=0,
\]
and
\[
\delta(\{\gamma\})(x^*_\gamma)=x^*_\gamma(\{\gamma\})=1,\qquad
\delta(\{\gamma'\})(x^*_\gamma)=x^*_\gamma(\{\gamma'\})=0\quad(\gamma'\neq\gamma).
\]
Therefore,
\[
f_\gamma(x^*_\gamma)=\min\{\max\{0,-1\},0\}=0,\qquad
f_{\gamma'}(x^*_\gamma)=\min\{\max\{-1,-1\},0\}=-1\quad(\gamma'\neq\gamma),
\]
so
\[
|(f_\gamma-f_{\gamma'})(x^*_\gamma)|=1\qquad(\gamma\neq\gamma').
\]
Therefore
\[
\|f_\gamma-f_{\gamma'}\|\ge
\sup_{x^*\in L^*}|(f_\gamma-f_{\gamma'})(x^*)|
\ge |(f_\gamma-f_{\gamma'})(x^*_\gamma)|=1.
\]
Thus $\{f_\gamma:\gamma\in\Gamma\}\subseteq[\delta(a),\delta(b)]$ is a
$1$-separated set of cardinality $\kappa$, and hence
\[
\dens([\delta(a),\delta(b)])\ge\kappa.
\]
Since $\kappa$ was arbitrary while $|[a,b]_L|=2$, no upper bound on
$\dens([\delta(a),\delta(b)])$ can depend only on $|[a,b]_L|$.
\end{remark}

Recall that in the setting of the free Banach lattice generated by a Banach space, $FVL[E]$, the vector lattice generated by $\delta(E)$, is order dense in $FBL[E]$ if and only if $E$ is finite dimensional (see \cite{AD2025,OTTT2024}). Our next aim is to study which lattices $L$ satisfy that $\mathrm{FVL}\langle L\rangle$, the vector lattice generated by $\delta(L)$, is order dense in $\FBL L$. We need some preparatory lemmas first.

\begin{lemma}\label{lem:order-dense-aux}
Let $L$ be a lattice with minimum $0_L$ and maximum $1_L$, and let
\[
K_L=\{x^*\in L^*: \max\{|x^*(0_L)|,|x^*(1_L)|\}=1\}
\]
be endowed with the product topology. If $U\subseteq K_L$ is open,
$x_0^*\in U$, and $\varepsilon>0$, then there exists
$g\in\mathrm{FVL}\langle L\rangle_+$ such that $g(x_0^*)>0$,
$g\le\varepsilon$ on $U$, and $g=0$ on $K_L\setminus U$.
\end{lemma}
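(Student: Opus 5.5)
Since the topology on $K_L$ is the product (pointwise) topology, there is a basic open neighbourhood of $x_0^*$ inside $U$: finitely many $x_1,\dots,x_n\in L$ and $\eta>0$ with
\[
V=\{x^*\in K_L:\ |x^*(x_i)-x_0^*(x_i)|<\eta,\ i=1,\dots,n\}\subseteq U .
\]
The plan is to build an element of $\mathrm{FVL}\langle L\rangle$ that is positive at $x_0^*$ and vanishes outside $V$. The difficulty is that constants are not available in $\mathrm{FVL}\langle L\rangle$. The substitute is the element
\[
e=|\delta_{0_L}|\vee|\delta_{1_L}|\in\mathrm{FVL}\langle L\rangle_+ .
\]
By the definition of $K_L$, $e\equiv 1$ on $K_L$, so on $K_L$ the element $e$ plays the role of the constant $1$ (compare \cite[Lemma~2.5]{AMCRARZ2022a}).

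With this in hand, consider
\[
h=\Bigl(\eta e-\bigvee_{i=1}^n\bigl|\delta_{x_i}-x_0^*(x_i)\,e\bigr|\Bigr)^+ ,
\]
which lies in $\mathrm{FVL}\langle L\rangle_+$ since lattice-linear expressions in the $\delta_x$ stay in $\mathrm{FVL}\langle L\rangle$. For $x^*\in K_L$ we have
\[
h(x^*)=\bigl(\eta-\max_i|x^*(x_i)-x_0^*(x_i)|\bigr)^+ .
\]
Three properties follow directly.
\begin{enumerate}[(i)]
\item $h(x_0^*)=\eta>0$.
\item $h(x^*)=0$ whenever $x^*\in K_L\setminus V$, and in particular on $K_L\setminus U$.
\item $0\le h\le\eta$ on $K_L$.
\end{enumerate}

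Finally set $g=\min\{1,\varepsilon/\eta\}\,h$, or simply choose $\eta\le\varepsilon$ at the start by shrinking $V$. Then $g\in\mathrm{FVL}\langle L\rangle_+$, $g(x_0^*)>0$, $g\le\varepsilon$ on $U$ (indeed on all of $K_L$), and $g=0$ on $K_L\setminus U$.

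The only real obstacle is producing a constant-like element, since $\mathrm{FVL}\langle L\rangle$ consists of positively homogeneous functions. This is exactly where the existence of a minimum and a maximum is used: it lets us take $e=|\delta_{0_L}|\vee|\delta_{1_L}|$, which equals $1$ on $K_L$. Everything else is the standard Urysohn-type construction for a basic neighbourhood in the product topology.
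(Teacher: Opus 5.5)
Your proof is correct, but it takes a more elementary route than the paper.

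The paper works through the identification of $\mathrm{FBL}\langle L\rangle$ with $C(K_L)$. It picks an Urysohn function $v$ with $v(x_0^*)=1$ and $v=0$ on $K_L\setminus U$. It then uses that the image of $\mathrm{FVL}\langle L\rangle$ is uniformly dense in $C(K_L)$ and contains the constants (through $|\delta_{0_L}|\vee|\delta_{1_L}|$) to find $u\in\mathrm{FVL}\langle L\rangle$ with $\|v-u\|_\infty<1/3$. Finally it truncates, $g=\varepsilon(u-1/3)^+$, which removes the approximation error so that $g$ vanishes off $U$ and stays below $\varepsilon$.

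You bypass approximation altogether. You shrink $U$ to a basic product neighbourhood determined by finitely many coordinates and write down an explicit tent function in $\mathrm{FVL}\langle L\rangle$ supported there. The only fact you use is that $e\equiv1$ on $K_L$, which is immediate from the definition of $K_L$. You need neither Urysohn's lemma nor compactness of $K_L$, nor the isomorphism theorem for $C(K_L)$. That theorem is what the paper relies on to know that the Urysohn function can be uniformly approximated by elements of $\mathrm{FVL}\langle L\rangle$ at all.

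The paper's argument, in turn, is the generic one valid for any uniformly dense unital vector sublattice of a $C(K)$. It costs nothing extra there, since the $C(K_L)$ picture is needed anyway in the following lemma to compare $g$ with $f$.

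A cosmetic point: if $U=K_L$, add a dummy coordinate so that $n\ge1$ and $\bigvee_{i=1}^n$ is not an empty supremum.
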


\begin{proof}
Recall that $\mathrm{FBL}\langle L\rangle$ is lattice isomorphic to $C(K_L)$
\cite[Section~2]{AMCRARZ2022a}. Under this isomorphism,
$\mathrm{FVL}\langle L\rangle$ corresponds to a dense sublattice of $C(K_L)$
containing the constant functions. Indeed, the constant function $1$
corresponds to $|\delta_{0_L}|\vee|\delta_{1_L}|$.

Let $U\subseteq K_L$ be open, let $x_0^*\in U$, and let $\varepsilon>0$.
Since $K_L$ is compact Hausdorff, Urysohn's lemma gives
$v\in C(K_L)$ such that
\[
0\le v\le 1,\qquad v(x_0^*)=1,\qquad v=0\text{ on }K_L\setminus U.
\]

Choose $u\in\mathrm{FVL}\langle L\rangle$ such that
$\|v-u\|_\infty<1/3$.

Set $w=(u-1/3)^+$. Then $w\in\mathrm{FVL}\langle L\rangle_+$.
If $x^*\in K_L\setminus U$, then $v(x^*)=0$ and $u(x^*)<1/3$, so
$w(x^*)=0$. Moreover, $u(x_0^*)>2/3$, and hence $w(x_0^*)>1/3$.
Finally, since $u\le v+1/3\le 4/3$, we have $w\le 1$.

Set $g=\varepsilon w$. Then $g\in\mathrm{FVL}\langle L\rangle_+$,
$g(x_0^*)>0$, $g=0$ on $K_L\setminus U$, and $g\le\varepsilon$ on $U$.
\end{proof}

\begin{lemma}\label{lem:order-dense}
Let $L$ be a distributive lattice with a maximum and a minimum. Then
$\mathrm{FVL}\langle L\rangle$ is order dense in
$\mathrm{FBL}\langle L\rangle$.
\end{lemma}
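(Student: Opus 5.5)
To prove Lemma~\ref{lem:order-dense} I will pass to the $C(K_L)$ picture, produce a small positive element of $\mathrm{FVL}\langle L\rangle$ there using Lemma~\ref{lem:order-dense-aux}, and transport it back.

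Recall the meaning of order density. A sublattice $V$ of $X$ is order dense if for every $0<f\in X$ there is $v\in V$ with $0<v\le f$.

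By \cite[Theorem~2.7]{AMCRARZ2022a}, $\FBL L$ is lattice isomorphic, up to an equivalent norm, to $C(K_L)$. This isomorphism is just restriction of functions to $K_L$. Every $x^*\in L^*$ with $x^*\neq 0$ is a positive multiple of an element of $K_L$. Since the elements of $\FBL L$ are positively homogeneous, a function in $\FBL L$ vanishes identically iff it vanishes on $K_L$, and $0\le g\le f$ holds on $L^*$ iff it holds on $K_L$. Lattice isomorphisms preserve and reflect order density, so it suffices to show the following: for every $0<f\in C(K_L)$, viewed as the restriction of an element of $\FBL L$, there is $g\in\mathrm{FVL}\langle L\rangle$ with $0<g\le f$ on $K_L$.

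Fix $0<f\in C(K_L)$ and pick $x_0^*\in K_L$ with $f(x_0^*)>0$. Set $\varepsilon=f(x_0^*)/2$ and
\[
U=\{x^*\in K_L: f(x^*)>\varepsilon\}.
\]
The set $U$ is open in $K_L$ because $f$ is continuous on $K_L$ with the product topology; this is exactly what the identification with $C(K_L)$ provides. It contains $x_0^*$.

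Lemma~\ref{lem:order-dense-aux} then gives $g\in\mathrm{FVL}\langle L\rangle_+$ with $g(x_0^*)>0$, $g\le\varepsilon$ on $U$, and $g=0$ on $K_L\setminus U$. We check $0<g\le f$ pointwise on $K_L$:
\begin{itemize}
\item on $U$ we have $g\le\varepsilon<f$;
\item off $U$ we have $g=0\le f$, since $f\ge 0$.
\end{itemize}
Moreover $g\ne0$ because $g(x_0^*)>0$. Extending by positive homogeneity, $0<g\le f$ holds in $\FBL L$, which proves the lemma.

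The only delicate point is justifying that the order of $\FBL L$ is detected on $K_L$, and that its elements are continuous there. I would handle it by citing the identification in \cite{AMCRARZ2022a} together with positive homogeneity, as in the first step. The rest is the Urysohn-type construction already done in Lemma~\ref{lem:order-dense-aux}.
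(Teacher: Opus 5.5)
Your proposal is correct and follows essentially the same route as the paper: identify $\mathrm{FBL}\langle L\rangle$ with $C(K_L)$, choose an open neighbourhood $U$ of $x_0^*$ on which $f>\varepsilon$, and apply Lemma~\ref{lem:order-dense-aux} to obtain $g$ with $0<g\le f$. Your extra check that order and non-vanishing are detected on $K_L$ is a justification the paper leaves implicit. It rests on positive homogeneity and on every nonzero $x^*\in L^*$ being a positive multiple of an element of $K_L$.
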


\begin{proof}
Let $0_L$ and $1_L$ denote the minimum and maximum of $L$, respectively.
Recall that $\mathrm{FBL}\langle L\rangle$ is lattice isomorphic to
$C(K_L)$ \cite[Section~2]{AMCRARZ2022a}, where
\[
K_L=\{x^*\in L^*: \max\{|x^*(0_L)|,|x^*(1_L)|\}=1\}
\]
is endowed with the product topology.

Let $0<f\in\mathrm{FBL}\langle L\rangle$. Choose $x_0^*\in K_L$ such that
$f(x_0^*)>0$. By continuity, there are an open neighbourhood
$U\subseteq K_L$ of $x_0^*$ and $\varepsilon>0$ such that
$f(x^*)>\varepsilon$ for all $x^*\in U$.

By Lemma~\ref{lem:order-dense-aux}, there exists
$g\in\mathrm{FVL}\langle L\rangle_+$ such that $g(x_0^*)>0$,
$g\le\varepsilon$ on $U$, and $g=0$ on $K_L\setminus U$.

If $x^*\in U$, then $g(x^*)\le\varepsilon<f(x^*)$, while if
$x^*\in K_L\setminus U$, then $g(x^*)=0\le f(x^*)$. Hence
$0<g\le f$. Since $f>0$ was arbitrary,
$\mathrm{FVL}\langle L\rangle$ is order dense in
$\mathrm{FBL}\langle L\rangle$.
\end{proof}

Let us say that a lattice $L$ is \emph{countably order bounded} if 
for every countable subset $A\subseteq L$ there are $a,b\in L$ such that
\[
        a\leq x\leq b \qquad (x\in A).
\]

\begin{theorem}\label{thm:order dense cob}
Let $L$ be a distributive lattice. If $L$ is countably order bounded, then $FVL\langle L\rangle$ is order dense in $\FBL L$. If $L$ is totally ordered, the converse also holds.
\end{theorem}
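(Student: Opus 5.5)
For the forward direction, I would reduce to Lemma~\ref{lem:order-dense}. Let $0<f\in\FBL L$. Since $f$ lies in the closed sublattice generated by $\{\delta_x:x\in L\}$, I pick a sequence $g_n$ in $lat\{\delta_x\}$ converging to $f$. Each $g_n$ involves finitely many $\delta_{x}$, so altogether only countably many points $A=\{x_k\}\subseteq L$ are used, and $f$ belongs to the closed sublattice generated by $\delta(A)$. By countable order boundedness there are $a\le x_k\le b$ for all $k$.

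I then take the bounded sublattice $L_0=[a,b]_L$. It is distributive, has maximum $b$ and minimum $a$, and contains $A$. Let $i:L_0\hookrightarrow L$ be the inclusion. The key point is that $L_0$ is a retract of $L$ via $r(z)=a\vee(z\wedge b)$, which is a lattice homomorphism by distributivity. Hence every $x^*\in L_0^*$ extends to $x^*\circ r\in L^*$. I would also check that $L_0$ is locally complemented, or argue directly with the retraction, so that the induced map $\overline i:\FBL{L_0}\to\FBL L$ is an injective (indeed isometric) lattice embedding with image the closed sublattice generated by $\delta(L_0)$. Concretely, $\overline{i}(h)=h\circ(\cdot|_{L_0})$, and restriction $L^*\to L_0^*$ is onto, with the norms comparing through composition with $r$.

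Thus $f=\overline i(h)$ for some $0<h\in\FBL{L_0}$. Lemma~\ref{lem:order-dense} gives $0<g\le h$ with $g\in\mathrm{FVL}\langle L_0\rangle$, so $0<\overline i(g)\le f$ with $\overline i(g)\in \mathrm{FVL}\langle L\rangle$. This settles the forward direction. The only delicate part is the identification of $\overline i$ as a lattice isomorphic embedding, and the retraction makes this straightforward.

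For the converse, take $L$ totally ordered and not countably order bounded. By passing to $L^{\mathrm{op}}$ via Proposition~\ref{prop:dual-isometry}, I may assume there is an increasing sequence $x_1<x_2<\cdots$ with no upper bound, i.e.\ cofinal in $L$. I would build $f=\sum_n 2^{-n}h_n\ge0$, where $h_n=(\delta_{x_{n+1}}-\delta_{x_n})\wedge$ something normalized, so that $f>0$ but every nonzero positive element of $\mathrm{FVL}\langle L\rangle$ fails to lie below $f$. Any $0<g\in\mathrm{FVL}\langle L\rangle$ uses finitely many points, which all lie below some $x_N$. Using Lemma~\ref{lem:strong-separation}-style homomorphisms that are constant up to $x_N$ and then rise, in a totally ordered $L$ these are easy to write as threshold functions. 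I would find $x^*$ with $g(x^*)>0$ while $f(x^*)$ is much smaller than $g(x^*)$ after rescaling. The idea is to exploit positive homogeneity: $g$ depends only on $x^*$ restricted to $\{y\le x_N\}$, so I can modify $x^*$ above $x_N$ freely, for instance setting it equal to $x^*(x_N)$ there, which kills the tail terms of $f$. I would choose $f$ so that its early terms vanish wherever $g$ is supported. This choice of $f$ is the main obstacle. One candidate is $f=\sum_n 2^{-n}\bigl(\delta_{x_{n+1}}-\delta_{x_n}\bigr)\wedge(\delta_{x_{n+1}}-\delta_{x_n})_{\text{tail}}$, designed to be supported only where $x^*$ increases beyond every $x_N$. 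I expect the precise construction, and the verification that $f\ne0$ and $f\in\FBL L$, to require the most care.
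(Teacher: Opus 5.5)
Your forward direction is correct and is essentially the paper's argument. The paper quotes a known result for the isometry of $\overline i$, whereas your observation that $\overline r\,\overline i=\Id$ with $\|\overline r\|,\|\overline i\|\le1$ gives it directly.

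The converse, however, is not proved. No $f$ is produced, the expression $(\delta_{x_{n+1}}-\delta_{x_n})_{\text{tail}}$ is undefined, and the strategy you sketch cannot work, for three reasons.
\begin{enumerate}
\item Suppose the $h_n$ are nonnegative (as $\delta_{x_{n+1}}-\delta_{x_n}\ge0$ suggests) and built from finitely many $\delta_x$. Then any $h_n\neq0$ gives $0<2^{-n}h_n\le f$ with $2^{-n}h_n\in\mathrm{FVL}\langle L\rangle$, so order density is not violated. More generally, a nonnegative series has a nonzero $\mathrm{FVL}$ minorant as soon as one of its terms does, so this form gains nothing.
\item A nonzero $f$ cannot be supported only where $x^*$ keeps increasing beyond every $x_N$. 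For $x^*\in L^*$, the homomorphisms $x^*(\,\cdot\wedge x_N)$ are constant above $x_N$ and converge pointwise to $x^*$, since $(x_N)$ is cofinal. Every element of $\FBL{L}$ is continuous on $L^*$ for the product topology: it is a uniform limit on $L^*$ of elements of $\mathrm{FVL}\langle L\rangle$, because $|f(x^*)|\le\|f\|$. Hence any $f\in\FBL{L}$ vanishing at all such eventually constant homomorphisms is identically zero.
\item Flattening $x^*$ above $x_N$ and rescaling cannot separate $g$ from $f$. Rescaling multiplies both by the same factor, and flattening leaves the early terms of $f$, which you cannot control against an arbitrary $g$.
\end{enumerate}

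The missing idea is to put a subtraction inside a positive part, and to let the witness \emph{jump} above $u_n$ by an amount that does not scale. The paper takes $f=\bigl(|\delta_{u_0}|-\sum_{n\ge1}\alpha_n|\delta_{u_n}|\bigr)_+$ with $\sum_n\alpha_n<1$. This $f$ is nonzero since $f(\mathbf 1)=1-\sum_n\alpha_n>0$. Let $0\le h\in\mathrm{FVL}\langle L\rangle$ depend only on a finite sublattice $F\ni u_0$ lying strictly below $u_n$, with $h(x^*)>0$. Let $y^*\in L^*$ be $0$ below $u_n$ and $1$ from $u_n$ on, and set $z_t^*=tx^*+(1-t)y^*$. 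This lies in $L^*$ because $L$ is a chain, so monotone maps into $[-1,1]$ are lattice homomorphisms. Then $z_t^*=tx^*$ on $F$, so $h(z_t^*)=t\,h(x^*)>0$ by positive homogeneity, whereas
\[
f(z_t^*)\le\bigl(t|x^*(u_0)|-\alpha_n|t\,x^*(u_n)+1-t|\bigr)_+=0
\]
for small $t>0$, contradicting $h\le f$.

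Your reduction of the lower-bound case to $L^{\mathrm{op}}$ is fine. The isometry of Proposition~\ref{prop:dual-isometry} sends $\delta_x$ to $-\delta_x$ (computed in $L^{\mathrm{op}}$), and hence maps $\mathrm{FVL}$ onto $\mathrm{FVL}$.
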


\begin{proof}
Suppose first that $L$ is countably order bounded and let
\[
        0<f\in \FBL L_+ .
\]
Since $f$ depends on a countable set $A\subseteq L$, there are $a,b\in L$ such that
\[
        a\leq x\leq b \qquad (x\in A).
\]
Let \(I=[a,b]_{L}\). This interval is a bounded distributive lattice, with
minimum $a$ and maximum $b$, and it contains $A$. The maps
\[
\begin{array}{cclcc}
         i:I\longrightarrow L,& &i(x)=x, &&\text{ and }\\
         r:L\longrightarrow I,& &r(x)=a\vee(x\wedge b),&&
\end{array}
\]
are lattice homomorphism with $ri(x)=x$ for $x\in I$. The universal property allows to extend these to Banach lattice homomorphisms $\overline{i}:\FBL I\to \FBL L$ and $\overline{r}:\FBL L\to \FBL I$ (see Section \ref{sec:lattice homo FBL}). By \cite[Proposition 3.1]{AMCRARZ2022b}, $\overline{i}:\FBL I\to \FBL L$ is an isometric embedding, so we can consider $\FBL I$ as a complemented sublattice of $\FBL L$ (the projection being the composition $\overline{i}\overline{r}$.)

By Lemma \ref{lem:order-dense}, there is $g\in FVL\langle I\rangle$ such that $0<g\leq \overline{r}(f)$.
Since $f$ depends only on the coordinates in $A\subseteq I$, we have that 
\[
0<\overline{i}g\leq \overline{i}\overline{r}f=f.
\]
Since $\overline{i}g\in FVL\langle L\rangle$, it follows that $FVL\langle L\rangle$ is order dense in $\FBL L$.

For the converse, we assume that $L$ is totally ordered. Suppose first that $L$
has a countable subset with no common upper bound. Replacing it by the
sequence of its finite suprema, we have an increasing sequence
$(u_n)_{n\geq0}$ with no upper bound. Fix positive scalars
$\alpha_n>0$ with $\sum_{n=1}^{\infty}\alpha_n<1$, and define
\[
          f(x^*)=
          \left(|\delta_{u_0}|(x^*)-\sum_{n=1}^{\infty}\alpha_n
          |\delta_{u_n}|(x^*)\right)_+ ,
          \qquad x^*\in L^* .
\]
The series converges uniformly, so $f\in \FBL{L}_+$. Moreover,
the constant homomorphism $\mathbf 1:L\to[-1,1]$, $\mathbf 1(x)=1$, gives
\[
          f(\mathbf 1)=1-\sum_{n=1}^{\infty}\alpha_n>0,
\]
so $f\neq0$.

Let $0\leq h\in FVL(L)$ and suppose $h\leq f$. The function $h$
is in the lattice generated by some finite set $\{\delta_{x_1},\dots,\delta_{x_n}\}$. Let $F$ be the finite sublattice of $L$ generated by $\{u_0,x_1,\dots, x_n\}$, and let $v=\bigvee F$. 

Since $(u_n)$ has no upper bound, and $L$ is totally ordered, choose $n$ with $v<u_n$. Define the 
map
\[
          y^*(x)=
          \begin{cases}
          0, & x<u_n,\\
          1, & x\ge u_n .
          \end{cases}
\]
Since $L$ is totally ordered, $y^*\in L^*$. Clearly, $y^*$ vanishes on $F$ and $y^*(u_n)=1$. 

If $h\neq0$, then we can choose $x^*\in L^*$ with $h(x^*)>0$. For $0<t\le1$, put
\[
          z_t^*=t x^*+(1-t)y^* .
\]
Since $L$ is totally ordered, $L^*$ is closed under convex combinations, so
$z_t^*\in L^*$.  On the coordinates on which $h$ depends, we have that
$z_t^*=t x^*$. Hence,
\[
          h(z_t^*)=t\,h(x^*)>0.
\]
On the other hand,
\[
          f(z_t^*)\le
          \left(t|x^*(u_0)|
          -\alpha_n|tx^*(u_n)+(1-t)|\right)_+ .
\]
For sufficiently small $t>0$, the expression inside the positive part is
negative, and therefore $f(z_t^*)=0$. This contradicts $0\leq h\leq f$.
Hence no non-zero element of $FVL\langle L\rangle_+$ is dominated by $f$, so
order density fails.

The case where $L$ has a countable subset with no common lower bound is
similar.
\end{proof}

We do not know whether the hypothesis of $L$ being totally ordered can be removed completely from the previous result.

\section{$\FBL L$ has no non-trivial projection bands}

The study of projection bands in free Banach lattices goes back to de Pagter
and Wickstead \cite{DPW2015} for the case of a generating set, and was later
observed to extend to free Banach lattices generated by Banach spaces in \cite[Section~9]{OTTT2024}. In this section, we prove the corresponding result for free Banach lattices generated by
lattices: when $|L|>1$, the only projection bands in $\FBL L$ are $\{0\}$
and $\FBL L$. We then derive the corresponding consequences for order
completeness, order continuity of the norm, and atoms.

\begin{lemma}
For a distributive lattice $L$, the set $L^*$ contains a non-constant
element if and only if $|L|>1$.
\end{lemma}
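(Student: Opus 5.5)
The plan is to prove the two implications separately, using Lemma~\ref{lem:strong-separation} for the nontrivial direction.

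\emph{If $|L|>1$, then $L^*$ has a non-constant element.} Pick $x\neq y$ in $L$. After interchanging them if necessary, we may assume $y\nleq x$. This is possible because if both $x\le y$ and $y\le x$ held, we would have $x=y$. Lemma~\ref{lem:strong-separation} then gives $x^*\in L^*$ with $x^*(x)=0$ and $x^*(y)=1$. Since $x^*$ takes two different values, it is a non-constant element of $L^*$. Alternatively, we could use the injectivity of $\delta$ from \cite[Proposition~3.1]{ARA2019}. If $\delta_x\neq\delta_y$, there is $x^*$ with $x^*(x)\neq x^*(y)$, which again gives a non-constant element. Either route is immediate.

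\emph{If $L^*$ has a non-constant element, then $|L|>1$.} We argue by contrapositive. If $|L|\le 1$, then, since lattices are assumed non-empty, $L=\{x_0\}$ is a single point. Every function on $L$, in particular every $x^*\in L^*$, is then constant. Hence a non-constant $x^*\in L^*$ forces $L$ to contain two points $x,y$ with $x^*(x)\neq x^*(y)$, and so $|L|>1$.

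There is no real obstacle here. The only substantive input is the separation result, Lemma~\ref{lem:strong-separation}, which already encodes distributivity through the injectivity of $\delta$. The one point to handle carefully is the choice of the order of $x$ and $y$ so that the lemma's hypothesis $y\nleq x$ holds, and this follows from antisymmetry.
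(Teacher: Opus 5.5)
Your proof is correct, but it handles the nontrivial direction differently from the paper.

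You use antisymmetry to arrange $y\nleq x$ and then invoke Lemma~\ref{lem:strong-separation}. The work is thereby delegated to the injectivity of $\delta$ for distributive lattices, which that lemma imports from Avil\'es and Rodr\'iguez Abell\'an.

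The paper argues directly from lattice theory instead. It replaces $x,y$ by $x\wedge y<x\vee y$ and notes that the principal ideal $\{z:z\le x\}$ and the principal filter $\{z:z\ge y\}$ are disjoint. It then separates them by a prime ideal $I$, using the prime ideal theorem for distributive lattices. Finally it takes the $\{0,1\}$-valued homomorphism that vanishes exactly on $I$.

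Your version is shorter and fits the paper's internal structure; the paper could in fact have cited its own Lemma~\ref{lem:strong-separation} at this point. The paper's argument is self-contained and does not pass through the functional representation of $\FBL L$. At bottom both rest on the same fact: a distributive lattice is separated by homomorphisms into the two-element chain, which is also what underlies the injectivity of $\delta$.

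Your explicit treatment of the trivial converse, which the paper only dismisses implicitly, is fine.
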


\begin{proof}
It is enough to prove that if $|L|>1$, then $L^*$ contains a non-constant
element. Take $x\ne y$ in $L$. Replacing $x,y$ by $x\wedge y$ and
$x\vee y$, we may assume that $x<y$. Let
\[
J=\{z\in L:z\le x\},\qquad G=\{z\in L:z\ge y\}.
\]
Then $J$ is an ideal, $G$ is a filter, and $J\cap G=\emptyset$. By
\cite[Theorem~10.21]{DP2002}, there is a prime ideal $I$ such that
$J\subset I$ and $I\cap G=\emptyset$. Hence we can find $x^*\in L^*$ such
that $x^*(z)=0$ precisely when $z\in I$. This $x^*$ is not constant.
\end{proof}

\begin{lemma}
If $|L|>1$, then $L^*\setminus\{0\}$ is path connected.
\end{lemma}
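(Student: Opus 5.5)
We prove path connectedness of $L^*\setminus\{0\}$, with $L^*$ carrying the product topology of $[-1,1]^L$ (pointwise convergence). The plan is to join every nonzero $x^*$ to a fixed point by explicit paths built from the lattice operations of $\mathbb R$. Postcomposing a lattice homomorphism $x^*:L\to[-1,1]$ with an increasing map $\phi:[-1,1]\to[-1,1]$ gives another lattice homomorphism, because $\phi(s\vee t)=\phi(s)\vee\phi(t)$ and $\phi(s\wedge t)=\phi(s)\wedge\phi(t)$ for increasing $\phi$. Moreover, if $\phi_t$ depends continuously on $t$, uniformly on $[-1,1]$, then $t\mapsto\phi_t\circ x^*$ is continuous into $[-1,1]^L$. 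We also use scalings: for $\lambda\in(0,1]$, $\lambda x^*\in L^*$, and $\lambda x^*\neq0$ whenever $x^*\ne0$.

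First reduce to a small set of targets. Take $x^*\neq0$. The constant homomorphisms $\mathbf 1$ and $-\mathbf 1$ lie in $L^*\setminus\{0\}$. By symmetry ($x^*\mapsto -x^*$ composed with the order reversal is not available, so we treat signs directly), suppose $x^*(z_0)>0$ for some $z_0$. Consider the path
\[
\phi_t(s)=\min\{1,\max\{-1+2t,\ s+2t\}\},\qquad t\in[0,1].
\]
Each $\phi_t$ is increasing, $\phi_0=\mathrm{id}$ and $\phi_1\equiv1$, so $t\mapsto \phi_t\circ x^*$ joins $x^*$ to $\mathbf 1$ continuously. We must check that it never passes through $0$. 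At $z_0$ we have $x^*(z_0)+2t>0$, so $\phi_t(x^*(z_0))>0$ for every $t$. Hence every $x^*$ that takes some positive value is joined to $\mathbf 1$. Symmetrically, every $x^*$ taking some negative value is joined to $-\mathbf 1$ by $\psi_t(s)=\max\{-1,\min\{1-2t,s-2t\}\}$.

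It remains to join $\mathbf 1$ to $-\mathbf 1$ inside $L^*\setminus\{0\}$. Constant paths through $0$ are forbidden, so here the hypothesis $|L|>1$ enters, through the previous lemma. Take a non-constant $y^*\in L^*$, and apply an increasing affine rescaling composed with truncation, as in Lemma~\ref{lem:strong-separation}, so that $y^*$ takes both a positive and a negative value. For instance, take $y^*(a)<y^*(b)$ and set $s\mapsto \min\{1,\max\{-1,\lambda(s-c)\}\}$ with $c$ the midpoint of $y^*(a)$ and $y^*(b)$; this map is increasing. By the previous step, $y^*$ is joined both to $\mathbf 1$ and to $-\mathbf 1$. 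Concatenating the paths gives path connectedness.

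The main obstacle is ensuring that the paths avoid the zero homomorphism. This is handled by tracking a single coordinate where the function stays strictly positive, or strictly negative. The only place $|L|>1$ is needed is to produce a homomorphism that changes sign, which links the "positive" component to the "negative" one. Continuity in the product topology is immediate, since $|\phi_t(s)-\phi_{t'}(s)|\le 2|t-t'|$ uniformly in $s$.
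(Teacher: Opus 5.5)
Your proof is correct. Its skeleton matches the paper's: every nonzero homomorphism is joined to $\mathbf 1$ or $-\mathbf 1$, and a non-constant homomorphism, supplied by the preceding lemma, links $\mathbf 1$ to $-\mathbf 1$.

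The difference lies in how the paths avoid $0$. The paper uses the affine paths $t\mapsto(1-t)u\pm t\mathbf 1$, which are also post-compositions with non-decreasing self-maps of $[-1,1]$. These avoid $0$ precisely because $u$ is non-constant. So any non-constant $u$ reaches both $\mathbf 1$ and $-\mathbf 1$ directly, and no further construction is needed to bridge them. The cost is that nonzero constants must be treated as a separate case, joined to $\pm\mathbf 1$ through constants.

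Your paths (on $[-1,1]$ they reduce to $s\mapsto\min\{1,s+2t\}$ and $s\mapsto\max\{-1,s-2t\}$) avoid $0$ by keeping one coordinate of fixed sign. They therefore handle constant and non-constant homomorphisms uniformly. The price is that a non-constant $y^*$ with values of only one sign reaches only one of $\pm\mathbf 1$. That is exactly why you need the extra rescale-and-truncate step to produce a homomorphism taking both signs, which you carry out correctly.

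Both arguments are equally elementary and give continuity even uniformly in the coordinates. Your remarks about the scalings $\lambda x^*$ and the unavailable symmetry $x^*\mapsto -x^*$ play no role and could be dropped.
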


\begin{proof}
Let $u\in L^*$ be non-constant, as in the preceding lemma. Hence, the affine paths
$t\mapsto (1-t)u+t\mathbf 1$ and $t\mapsto (1-t)u-t\mathbf 1$ do not pass
through the zero function; otherwise $u$ would be constant. Thus every
non-constant element is connected to both $\mathbf 1$ and $-\mathbf 1$.
A non-zero constant homomorphism is connected to either $\mathbf 1$ or
$-\mathbf 1$ through non-zero constant homomorphisms. Hence every element
of $L^*\setminus\{0\}$ lies in the same path component.
\end{proof}

\begin{definition}
Let $X$ be a non-empty set and let $f:X\to\mathbb R$. Define
\[
O_f=\{x\in X:f(x)\ne 0\}.
\]
If $W$ is a non-empty subset of $\mathbb R^X$, define
\[
O_W=\bigcup_{f\in W}O_f.
\]
\end{definition}

\begin{theorem}\label{thm:no proj bands}
If $L$ is a distributive lattice with $|L|\ge 2$, then the only projection
bands of $\mathrm{FBL}\langle L\rangle$ are $\{0\}$ and
$\mathrm{FBL}\langle L\rangle$.
\end{theorem}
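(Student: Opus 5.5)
We follow the connectedness strategy of de Pagter and Wickstead, using the two preceding lemmas and the sets $O_f$, $O_W$.

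Let $B$ be a projection band, $B^d$ its disjoint complement, and $P$ the band projection, so that $\FBL L=B\oplus B^d$. Put $O_1=O_B$ and $O_2=O_{B^d}$. These are subsets of $L^*\setminus\{0\}$, since every element of $\FBL L$ is positively homogeneous and hence vanishes at $0$. We will show:
\begin{enumerate}[(i)]
\item $O_1$ and $O_2$ are open in $L^*$ with the product (pointwise) topology;
\item $O_1\cap O_2=\emptyset$;
\item $O_1\cup O_2=L^*\setminus\{0\}$.
\end{enumerate}
Since $L^*\setminus\{0\}$ is path connected, hence connected, one of $O_1,O_2$ must then be empty. If $O_2=\emptyset$, every element of $B^d$ vanishes identically, so $B^d=\{0\}$ and $B=\FBL L$. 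Symmetrically, $O_1=\emptyset$ gives $B=\{0\}$.

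For (i), we need each $f\in\FBL L$ to be continuous on $L^*$ for the pointwise topology. This holds for each $\delta_x$ and is preserved by lattice-linear operations. It also passes to norm limits, because the norm \eqref{eq:FBL-L-norm} dominates the sup norm on $L^*$ (take $m=1$). Hence each $O_f$ is open, and so are the unions $O_1$ and $O_2$.

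For (ii), suppose $x^*\in O_f\cap O_g$ with $f\in B$ and $g\in B^d$. Then $|f|\wedge|g|=0$ pointwise, which is impossible at $x^*$.

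For (iii), fix $x^*\neq 0$. Some $x\in L$ has $\delta_x(x^*)=x^*(x)\neq0$. Write $\delta_x=P\delta_x+(\delta_x-P\delta_x)$ with $P\delta_x\in B$ and $\delta_x-P\delta_x\in B^d$. Since the sum is nonzero at $x^*$, one of the summands is nonzero there, so $x^*\in O_1\cup O_2$.

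The main point requiring care is (i): we must make sure the topology in which $L^*\setminus\{0\}$ was shown path connected matches the one in which the $O_f$ are open. The paths $t\mapsto(1-t)u\pm t\mathbf 1$ are continuous even in the sup topology, and every element of $\FBL L$ is sup-norm continuous on $L^*$. So we may run the argument with either the pointwise or the uniform topology on $L^*$, as long as we use the same one throughout.
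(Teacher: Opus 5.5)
Your argument is correct and is essentially the paper's proof. The paper uses Proposition~6.1 of de Pagter--Wickstead to reduce to connectedness of $O_{\FBL L}=L^*\setminus\{0\}$, and your steps (i)--(iii) reprove that criterion inline for $\FBL L=B\oplus B^d$, checking continuity on $L^*$ directly via the sup-norm bound ($m=1$) rather than citing the identification with $C_{ph}(L^*)$.
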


\begin{proof}
By \cite[Proposition~7.4]{BGSDHMCT2026}, we may identify
$\mathrm{FBL}\langle L\rangle$ with the space $C_{ph}(L^*)$ of continuous
positively homogeneous functions on $L^*$. By \cite[Proposition~6.1]{DPW2015},
it is enough to show that $O_{\mathrm{FBL}\langle L\rangle}$ is connected.
For $l\in L$, write
\[
O_{\delta_l}=\{\xi\in L^*:\xi(l)\ne 0\}.
\]
Since
\[
\bigcup_{l\in L}O_{\delta_l}=L^*\setminus\{0\}
\]
and every positively homogeneous function vanishes at $0$, we obtain
\[
O_{\mathrm{FBL}\langle L\rangle}=L^*\setminus\{0\}.
\]
By the preceding lemma, this set is connected. Therefore the only projection
bands are $\{0\}$ and the whole space.
\end{proof}

\begin{corollary}
If $|L|\ge 2$, then:
\begin{enumerate}
\item $\mathrm{FBL}\langle L\rangle$ is not Dedekind $\sigma$-complete.
\item The norm on $\mathrm{FBL}\langle L\rangle$ is not order continuous.
\item $\mathrm{FBL}\langle L\rangle$ has no atoms.
\end{enumerate}
\end{corollary}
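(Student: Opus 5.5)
The three items follow from Theorem~\ref{thm:no proj bands} together with standard facts on Banach lattices. In each case I will show that the property in question would force a non-trivial projection band. Throughout, assume $|L|\ge 2$. Then $\mathrm{FBL}\langle L\rangle\neq\{0\}$, since $\delta$ is injective on a distributive lattice by \cite[Proposition~3.1]{ARA2019}. So there are non-zero positive elements, for instance $|\delta_x|$ with $\delta_x\neq 0$.

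For (1), recall that in a Dedekind $\sigma$-complete vector lattice every principal band is a projection band. Indeed, $\sigma$-completeness gives the principal projection property: for $0<u$, the band $B_u$ generated by $u$ is a projection band, with projection of $v\ge 0$ equal to $\sup_n (v\wedge nu)$. So it suffices to find $u>0$ with $B_u\neq\mathrm{FBL}\langle L\rangle$. Then $B_u$ would be a non-trivial projection band, contradicting Theorem~\ref{thm:no proj bands}.

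The main obstacle is producing such a $u$, that is, two non-zero disjoint elements. I would work in the function representation $C_{ph}(L^*)$ from the proof of Theorem~\ref{thm:no proj bands}. Take the non-constant $u^*\in L^*$ from the first lemma of this section. Its negative $-u^*$ is also a lattice homomorphism into $[-1,1]$. The constants $\mathbf 1$ and $-\mathbf 1$ also belong to $L^*$.

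For any $x\in L$, the elements $\delta_x^+$ and $\delta_x^-$ are disjoint. Both are non-zero, since $\delta_x^+(\mathbf 1)=1$ and $\delta_x^-(-\mathbf 1)=1$. Hence $\delta_x^-\neq 0$ lies in the disjoint complement of $B_{\delta_x^+}$. So $B_{\delta_x^+}$ is a proper band containing a non-zero element, as required. Alternatively, one may simply quote the standard fact that a Dedekind $\sigma$-complete lattice with at least two non-zero disjoint elements has a non-trivial projection band.

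For (2), a Banach lattice with order continuous norm is Dedekind complete, in particular Dedekind $\sigma$-complete. This contradicts (1).

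For (3), if $a>0$ is an atom, then the band generated by $a$ is the one-dimensional space $\mathbb R a$. In an Archimedean vector lattice this principal band is always a projection band: the projection is $v\mapsto\sup_n(v^+\wedge na)-\sup_n(v^-\wedge na)$. These suprema exist because the interval $[0,na]$ is one-dimensional, so $v^+\wedge na=\lambda_n a$ with bounded increasing $\lambda_n$. Since $\mathrm{FBL}\langle L\rangle\neq \mathbb R a$ by the disjoint pair $\delta_x^\pm$ found above, this band would be non-trivial. That contradicts Theorem~\ref{thm:no proj bands}.
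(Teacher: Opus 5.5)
Your proposal is correct and follows essentially the same route as the paper. Each item is reduced to Theorem~\ref{thm:no proj bands}: (1) through the principal projection property of Dedekind $\sigma$-complete vector lattices, (2) through the fact that an order continuous norm forces Dedekind completeness, and (3) through the observation that the span of an atom is a projection band. Your only addition is to make explicit, via the disjoint non-zero pair $\delta_x^+,\delta_x^-$ detected by the constant homomorphisms $\pm\mathbf 1$, why the relevant bands are proper, which the paper leaves implicit.
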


\begin{proof} 
\begin{enumerate}
\item This follows from \cite[Proposition~1.2.11]{MN1991}.
\item This follows from \cite[Theorem~2.4.2]{MN1991}.
\item The closed linear span of an atom generates a projection band. Since
there are no non-trivial projection bands, no atoms can exist.
\end{enumerate}
\end{proof}

\begin{corollary}
If $l\in L$, then $|\delta_l|$ is a weak order unit of
$\mathrm{FBL}\langle L\rangle$.
\end{corollary}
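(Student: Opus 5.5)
Recall that $e\ge 0$ is a weak order unit if $f\wedge e=0$ forces $f=0$; equivalently, the band generated by $e$ is the whole space. My plan is to work in the functional representation $C_{ph}(L^*)$ used in the proof of Theorem~\ref{thm:no proj bands}. I will show that if $f\in\FBL L$ and $|f|\wedge|\delta_l|=0$, then $f=0$. Since lattice operations are pointwise, the hypothesis says $f$ vanishes on the open set $O_{\delta_l}=\{\xi\in L^*:\xi(l)\neq 0\}$. So it suffices to show that every $f\in C_{ph}(L^*)$ vanishing on $O_{\delta_l}$ vanishes identically. (The case $|L|=1$ is trivial: then $\FBL L$ is spanned by $\delta_l$, since every $\xi$ is a constant map and $f(\xi)=f(\xi(l)\mathbf 1)$ is linear in $\xi(l)$ by positive homogeneity on $\pm$. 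So I assume $|L|\ge 2$.)

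The key step is density: I claim $O_{\delta_l}$ is dense in $L^*$ for the product topology. Take any $\xi\in L^*$ with $\xi(l)=0$. For $0<t\le 1$, consider $\xi_t=(1-t)\xi+t\mathbf 1$. Its components are $(1-t)\xi(x)+t$. The map $s\mapsto (1-t)s+t$ is an increasing affine map of $[-1,1]$ into $[-1,1]$. Composing a lattice homomorphism into $[-1,1]$ with an increasing map of the chain $[-1,1]$ gives again a lattice homomorphism, since increasing maps on a chain preserve max and min. Hence $\xi_t\in L^*$, and $\xi_t(l)=t\neq 0$. As $t\to 0$, $\xi_t\to\xi$ uniformly, in particular in the product topology. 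So $\xi\in\overline{O_{\delta_l}}$.

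By continuity, $f=0$ on $\overline{O_{\delta_l}}=L^*$, so $f=0$. Alternatively, one can argue through bands, avoiding any question of which topology makes elements of $\FBL L$ continuous. The band $B$ generated by $|\delta_l|$ has disjoint complement $B^d$. Any $f\in B^d$ vanishes on $O_{\delta_l}$, and this forces $f=0$ by the same density and continuity argument. Then $B^{dd}=\FBL L$, which is exactly the weak-unit property.

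The main obstacle is ensuring that elements of $\FBL L$ are continuous with respect to a topology in which the approximation $\xi_t\to\xi$ holds. Here I would invoke the identification with $C_{ph}(L^*)$ from \cite[Proposition~7.4]{BGSDHMCT2026}, with $L^*\subset[-1,1]^L$ carrying the product (pointwise) topology. Each $\delta_x$ is continuous for that topology, and norm limits preserve this, since the norm dominates the sup norm on $L^*$. Since uniform convergence implies pointwise convergence, the continuity argument goes through.
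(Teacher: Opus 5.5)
Your proof is correct and follows the paper's route. The paper observes that $O_f\subset\{\xi\in L^*:\xi(l)=0\}$ and that this set has empty interior, which is the same as your density of $O_{\delta_l}$ combined with continuity of $f$ on $L^*$; your path $\xi_t=(1-t)\xi+t\mathbf 1$ supplies the justification of the empty-interior claim that the paper leaves unstated. One small slip in a side remark: the case $|L|=1$ needs no separate treatment, since your density argument never uses $|L|\ge 2$, and in that case $\FBL L$ is spanned by $\delta_l^+$ and $\delta_l^-$, not by $\delta_l$ alone.
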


\begin{proof}
Suppose that $f\in\mathrm{FBL}\langle L\rangle$ and
$f\perp|\delta_l|$. Then
\[
O_f\subset\{\xi\in L^*:\xi(l)=0\}.
\]
The set on the right has empty interior, so $O_f=\emptyset$ and hence
$f=0$.
\end{proof}

\section{Quasi-interior points in $\mathrm{FBL}\langle L\rangle$}

For a Banach lattice $E$ and $u\in E_+$, recall that $u$ is a
\emph{quasi-interior point} if the principal ideal
\[
E_u=\{x\in E:|x|\le cu\text{ for some }c\ge0\}
\]
is dense in $E$.

For free Banach lattices generated by Banach spaces, $\mathrm{FBL}[E]$ has
a quasi-interior point if and only if $E$ is separable
\cite[Theorem~10.4]{OTTT2024}. In the lattice setting, the
corresponding condition is the existence of a countable separating subset.

\begin{theorem}\label{thm:quasi-interior-fbl}
Let $L$ be a distributive lattice. Then $\mathrm{FBL}\langle L\rangle$ has a
quasi-interior point if and only if $L$ contains a countable separating
subset, that is, a countable set $S\subset L$ such that for every non-zero
$\phi\in L^*$ there exists $s\in S$ with $\phi(s)\ne0$.
\end{theorem}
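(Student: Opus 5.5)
For sufficiency, I will produce an explicit quasi-interior point from the countable separating set. Let $S=\{s_n:n\in\mathbb N\}$ be separating and put
\[
u=\sum_{n=1}^\infty 2^{-n}|\delta_{s_n}|,
\]
which converges in norm since $\|\delta_x\|\le1$, so $u\in\FBL L_+$. As the separating property gives $u(\phi)>0$ for every $\phi\ne0$ in $L^*$, $u$ vanishes only at $0$.

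To show that the ideal generated by $u$ is dense, it suffices to approximate each positive $f$ in $\FBL L$, and I will use $f\wedge nu\to f$ in norm. This convergence is the step I expect to be the main obstacle, since the norm \eqref{eq:FBL-L-norm} is not a sup-norm. My first attempt will use the AM-space structure of $\FBL L$, i.e.\ that $\FBL L$ is lattice isomorphic to an AM-space \cite[Theorem~3.9]{AMCRARZ2022b}, together with its identification with continuous positively homogeneous functions on $L^*$ (\cite[Proposition~7.4]{BGSDHMCT2026}), where the relevant part of $L^*$ is compact in the product topology. Concretely, given $\varepsilon>0$, the set $\{\phi\in L^*: f(\phi)\ge\varepsilon\}$ (suitably normalised) is compact and avoids $0$. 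Since $u>0$ on it, we have $u\ge c>0$ there, hence $nu\ge f$ on that set for large $n$, and so $0\le f-f\wedge nu\le\varepsilon$ pointwise. I would then convert this pointwise bound into a norm bound, either via the AM-space lattice isomorphism, under which uniform approximation on the compact representing set controls the norm up to a constant, or by reducing to $f$ in the lattice generated by finitely many $\delta_x$, where $f\le C\max_i|\delta_{x_i}|$ and the estimate becomes a Dini-type compactness argument. An alternative I will keep in reserve is the standard fact that in a Banach lattice $u$ is quasi-interior iff $f\wedge nu\to f$ for all $f\ge0$; it also suffices to check this on the generators $|\delta_x|$, since the set of $f$ for which the convergence holds is a closed ideal.

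For necessity, suppose $u$ is a quasi-interior point. Since $u$ is a norm-limit of elements of $lat\{\delta_x\}$, and each element of the lattice is a lattice-linear combination of finitely many $\delta_x$, $u$ depends on countably many coordinates. Pick $g_k\in lat\{\delta_x\}$ with $\|u-g_k\|\to0$ and let $S$ be the countable set of $x$ used in all the $g_k$. If $\phi\ne0$ vanished on $S$, then $g_k(\phi)=0$ for all $k$, and since point evaluations are bounded ($|h(\phi)|\le\|h\|$ after scaling $\phi$ into the unit ball of the norming family), we would get $u(\phi)=0$. Then every $h$ in the ideal generated by $u$ satisfies $h(\phi)=0$, and by density and continuity of evaluation every element of $\FBL L$ vanishes at $\phi$. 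But $\phi\ne0$ means $\phi(x)\ne0$ for some $x$, so $\delta_x(\phi)\ne0$, a contradiction. Hence $S$ is a countable separating subset.
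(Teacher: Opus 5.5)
Your proposal is correct and follows essentially the paper's proof. It uses the same element $u=\sum_n 2^{-n}|\delta_{s_n}|$ and the same compactness argument on $L^*$ away from $0$ (you truncate $f\ge0$ by $f\wedge nu$ where the paper uses $(f\wedge mu)\vee(-mu)$), and the same countable-support argument for necessity. The step you flag, passing from a pointwise bound to a norm bound, is used only implicitly in the paper, when it concludes from $\|f_m-f\|_\infty\le\varepsilon$. It is exactly the equivalence $\|f\|_\infty\le\|f\|\le 2\|f\|_\infty$ on $\mathrm{FBL}\langle L\rangle$, which follows directly from the norm formula, positive homogeneity and the monotonicity of the $x_i^*$, so your first option goes through.
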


\begin{proof}
Suppose first that $u\in\mathrm{FBL}\langle L\rangle_+$ is a
quasi-interior point. Since $lat\{\delta_x:x\in L\}$ is dense in
$\mathrm{FBL}\langle L\rangle$, there is a sequence $(f_n)$ in
$\mathrm{FBL}\langle L\rangle$ such that:
\begin{enumerate}[(i)]
\item $f_n\to u$ in norm;
\item each $f_n$ belongs to the vector sublattice generated by
$\{\delta_x:x\in F_n\}$ for some finite set $F_n\subset L$.
\end{enumerate}

Set $S=\bigcup_{n\ge1}F_n$. Then $S$ is countable.

Let $0\ne\phi\in L^*$. Choose $x\in L$ with $\phi(x)\ne0$. Since $u$ is
quasi-interior, the ideal $\mathrm{FBL}\langle L\rangle_u$ is dense in
$\mathrm{FBL}\langle L\rangle$. Hence there is a sequence $(g_k)$ in this
ideal such that $g_k\to\delta_x$ in norm. Evaluating at $\phi$, we get
$g_k(\phi)\to\phi(x)\ne0$, so $g_k(\phi)\ne0$ for some $k$. Since
$|g_k|\le c_k u$ for some $c_k\ge0$, it follows that $u(\phi)>0$.

Since $f_n\to u$ in norm, and evaluation at $\phi$ is continuous, we have
$f_n(\phi)>0$ for all sufficiently large $n$. Because $f_n$ belongs to the
vector sublattice generated by $\{\delta_s:s\in F_n\}$, this forces
$\phi(s)\ne0$ for some $s\in F_n\subset S$. Thus $S$ is separating.

Conversely, let $S=\{s_n:n\ge1\}$ be a countable separating subset. Define
\[
u=\sum_{n=1}^{\infty}2^{-n}|\delta_{s_n}|
\in\mathrm{FBL}\langle L\rangle_+.
\]
The series converges in $\mathrm{FBL}\langle L\rangle$. If
$\phi\in L^*\setminus\{0\}$, then there exists $s_n\in S$ such that
$\phi(s_n)\ne0$, and consequently
\[
u(\phi)\ge2^{-n}|\delta_{s_n}(\phi)|>0.
\]
We prove that $u$ is quasi-interior. Fix
$f\in\mathrm{FBL}\langle L\rangle$ and $\varepsilon>0$.

Since $f$ is continuous on $L^*$ and $f(0)=0$, there is a neighbourhood
$V$ of $0$ in $L^*$ such that
\[
|f(\phi)|<\varepsilon/2\qquad(\phi\in V).
\]

Put $K=L^*\setminus V$. Then $K$ is compact and $0\notin K$. Since $u$ is
continuous and $u(\phi)>0$ for every $\phi\in K$,
\[
\delta:=\min_{\phi\in K}u(\phi)>0.
\]

Choose an integer $m$ such that $m\delta>\|f\|_\infty$ and set
\[
f_m=(f\wedge mu)\vee(-mu)\in\mathrm{FBL}\langle L\rangle_u.
\]

For $\phi\in K$, the inequality
$mu(\phi)\ge m\delta>\|f\|_\infty\ge |f(\phi)|$ implies
$f_m(\phi)=f(\phi)$.

For $\phi\in V$, the elementary inequality
$|(a\wedge b)\vee(-b)|\le |a|$ for $a\in\mathbb R$ and $b\ge0$ gives
$|f_m(\phi)|\le |f(\phi)|$. Hence
\[
|f_m(\phi)-f(\phi)|\le |f_m(\phi)|+|f(\phi)|<\varepsilon.
\]

Thus $\|f_m-f\|_\infty\le\varepsilon$. Since $\varepsilon>0$ was arbitrary,
$f$ belongs to the closure of $\mathrm{FBL}\langle L\rangle_u$. Therefore
$\overline{\mathrm{FBL}\langle L\rangle_u}
=\mathrm{FBL}\langle L\rangle$, so $u$ is a quasi-interior point.
\end{proof}
\section{Banach lattice homomorphisms on $\FBL L$}\label{sec:lattice homo FBL}

\subsection{Induced Banach lattice homomorphisms}

Given a lattice homomorphism $T:L\to M$ between distributive lattices, the
universal property yields a unique Banach lattice homomorphism
$\overline T:\FBL L\to\FBL M$ satisfying
$\overline T\circ\delta_L=\delta_M\circ T$. This is analogous to the
operator induced by a bounded linear map between Banach spaces in the theory
of $\mathrm{FBL}[E]$. We describe $\overline T$ as a composition operator
and discuss surjectivity.
\begin{theorem}
Let $T:L\to M$ be a lattice homomorphism. Then the induced extension
\[
\overline T:\mathrm{FBL}\langle L\rangle\to\mathrm{FBL}\langle M\rangle
\]
is given, for $f\in\mathrm{FBL}\langle L\rangle$, by
\[
\overline T(f)=f\circ T^*,
\]
where $T^*:M^*\to L^*$ is defined by $T^*(x^*)=x^*\circ T$.
\end{theorem}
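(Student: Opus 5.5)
The plan is to define the candidate operator $S(f)=f\circ T^*$ on the whole function space and show that it coincides with $\overline T$ by uniqueness.

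\textbf{Step 1: $T^*$ is well defined and the basic identity.} For $x^*\in M^*$, the composition $x^*\circ T:L\to[-1,1]$ is a lattice homomorphism, so $T^*$ maps $M^*$ into $L^*$. It also commutes with scaling, $T^*(\lambda x^*)=\lambda T^*(x^*)$. For $x\in L$ we have
\[
(\delta_x\circ T^*)(x^*)=x^*(Tx)=\delta_{Tx}(x^*),
\]
so $S\delta_L=\delta_M T$.

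\textbf{Step 2: $S$ is a contractive lattice homomorphism.} Let $S$ act on positively homogeneous functions $f:L^*\to\mathbb R$ with $\|f\|<\infty$. Since $T^*$ commutes with scaling, $Sf$ is positively homogeneous. Since the lattice operations are pointwise, $S$ is linear and preserves them. For the norm, take $x_1^*,\dots,x_m^*\in M^*$ with $\sup_{y\in M}\sum_i|x_i^*(y)|\le1$. Then
\[
\sup_{x\in L}\sum_i|x_i^*(Tx)|\le\sup_{y\in M}\sum_i|x_i^*(y)|\le1,
\]
so the family $T^*x_i^*$ is admissible in \eqref{eq:FBL-L-norm} for $L$. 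It follows that $\sum_i|Sf(x_i^*)|\le\|f\|$, and taking the supremum gives $\|Sf\|\le\|f\|$.

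\textbf{Step 3: $S$ maps $\FBL L$ into $\FBL M$.} $S$ sends each $\delta_x$ to $\delta_{Tx}\in\FBL M$. Being a lattice homomorphism, it sends $lat\{\delta_x:x\in L\}$ into $lat\{\delta_{Tx}\}\subseteq\FBL M$. Being continuous, and since $\FBL M$ is closed in the ambient space, it sends the closure $\FBL L$ into $\FBL M$.

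\textbf{Step 4: conclusion by uniqueness.} Both $S|_{\FBL L}$ and $\overline T$ are Banach lattice homomorphisms agreeing on $\delta_L(L)$. Hence they agree on $lat\{\delta_x:x\in L\}$, and by continuity on its closure, which is $\FBL L$. Therefore $\overline T f=f\circ T^*$.

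The main obstacle is Step 2's norm estimate, which requires the family $T^*x_i^*$ to be admissible for $L$. This holds only because the supremum over $T(L)\subseteq M$ is at most the supremum over all of $M$. The rest of the argument is routine.
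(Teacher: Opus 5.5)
Your proof is correct and follows the same route as the paper: realize $\overline T$ as the composition operator $f\mapsto f\circ T^*$, check that it sends $\delta_x$ to $\delta_{Tx}$, and conclude by uniqueness. Your Step 2 norm estimate and Step 3 range check spell out what the paper leaves implicit, since it only asserts that $C_{T^*}$ is a lattice homomorphism into $C(M^*)$ before invoking uniqueness.
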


\begin{proof}
Consider the composition operator induced by $T^*$:
\[
C_{T^*}f(x^*)=f(T^*x^*)
\qquad(f\in\mathrm{FBL}\langle L\rangle,\ x^*\in M^*).
\]
Then $C_{T^*}:\FBL L\to C(M^*)$ is a lattice homomorphism. Moreover, for
every $x\in L$,
\[
C_{T^*}(\delta_x)=\delta_{Tx},
\]
so the uniqueness part of the universal property gives
\[
\overline T=C_{T^*}.
\]
\end{proof}

\begin{remark}
If $\overline T$ is injective, then $T$ is injective. The converse does not
hold in general; see \cite[Example~4.4]{AMCRARZ2022b}.
\end{remark}

\begin{theorem}\label{thm:onto}
Let $T:L\to M$ be a lattice homomorphism. If $T$ is surjective, then the
induced Banach lattice homomorphism
\[
\overline T:\mathrm{FBL}\langle L\rangle\to\mathrm{FBL}\langle M\rangle
\]
is surjective.
\end{theorem}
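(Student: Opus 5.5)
The plan is to show that the range of $\overline T$ is a closed vector sublattice containing every generator $\delta_y$, $y\in M$, and that it is closed because $\overline T$ is a quotient map. Since $T$ is surjective, each $y\in M$ can be written as $y=Tx$ for some $x\in L$. Then
\[
\delta_y=\delta_{Tx}=\overline T(\delta_x).
\]
Hence $\overline T(\FBL L)$ is a vector sublattice of $\FBL M$ containing $lat\{\delta_y:y\in M\}$, which is dense. It therefore remains only to prove that the range of $\overline T$ is closed. The cleanest route is the quotient description of $\FBL L$ recalled in the preliminaries, which reduces closedness of the range to the corresponding statement for free Banach lattices on sets.

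Let $u_L:L\to\mathrm{FBL}(L)$ and $u_M:M\to\mathrm{FBL}(M)$ be the free Banach lattices on the underlying sets, with quotient maps $q_L,q_M$ onto $\FBL L,\FBL M$. The map $u_M\circ T:L\to\mathrm{FBL}(M)$ is bounded by $1$, so it induces a Banach lattice homomorphism $S:\mathrm{FBL}(L)\to\mathrm{FBL}(M)$ with $S u_L(x)=u_M(Tx)$. I first check that $S$ is surjective, in fact a quotient map. Choose a section $\sigma:M\to L$ with $T\sigma=\mathrm{id}_M$. It induces $R:\mathrm{FBL}(M)\to\mathrm{FBL}(L)$, $Ru_M(y)=u_L(\sigma y)$, with $\|R\|\le1$. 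Then $SR$ agrees with the identity on the generators $u_M(y)$. By uniqueness in the universal property, $SR=\mathrm{Id}$, so $S$ is onto.

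Next, $q_M\circ S$ and $\overline T\circ q_L$ are Banach lattice homomorphisms $\mathrm{FBL}(L)\to\FBL M$ that both send $u_L(x)$ to $\delta_{Tx}$. By uniqueness they coincide. Since $q_M\circ S$ is surjective, $\overline T\circ q_L$ is surjective, and hence $\overline T$ is surjective.

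An alternative shortcut avoids the sets entirely. Show that the composite $\overline{T}\,\overline{\sigma}$ is the identity on $\FBL M$, which would need $\sigma$ to be a lattice homomorphism; it generally is not. This is exactly why the detour through $\mathrm{FBL}(\cdot)$ of sets is used: there, no compatibility with $\vee,\wedge$ is required. The only delicate step is the identification $\overline T\circ q_L=q_M\circ S$. For this one must note that $q_M\circ u_M=\delta_M$, and that $q_L\circ u_L=\delta_L$ is the canonical map. Both composites are then determined by their values on $u_L(L)$, which generates $\mathrm{FBL}(L)$ as a Banach lattice. I expect no further obstacle. In particular, the norm equalities in the universal property are not needed, only boundedness and uniqueness.
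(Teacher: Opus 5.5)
Your argument is correct, but it gets around the key obstruction differently from the paper. You rightly note that a set-theoretic section $\sigma$ of $T$ need not be a lattice homomorphism. You handle this by lifting to the free Banach lattices $\mathrm{FBL}(L)$, $\mathrm{FBL}(M)$ over the underlying sets, where $\sigma$ induces a contractive right inverse $R$ of $S$ with nothing to check, and then descending through the square $q_M S=\overline T q_L$.

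The paper instead stays inside $\FBL{\cdot}$. It factors $\overline T=SQ$ through $Z=\FBL L/\ker\overline T$ and defines $R:M\to Z$ by $R(Tx)=Q\delta_L(x)$. This is well defined, since $Tx=Ty$ implies $\delta_L(x)-\delta_L(y)\in\ker\overline T$. It is automatically a lattice homomorphism, because $Q\delta_L$ is one and $T$ is onto. In effect $R=Q\delta_L\circ\sigma$: quotienting by $\ker\overline T$ is exactly what turns your non-homomorphic section into a homomorphism, so that $\widehat R$ exists and $S\widehat R=\Id$.

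The paper's route uses only the universal property of $\FBL{\cdot}$ and needs no global choice of section. Yours additionally relies on the presentation $\FBL M\cong\mathrm{FBL}(M)/I$ to know that $q_M$ is onto. In exchange, your square shows transparently that $\overline T$ is a metric quotient map: lift $g$ with $\|g\|<1$ through $q_M$, then apply $R$ and $q_L$. In the paper's version the same fact follows from $S$ being injective with contractive inverse $\widehat R$.

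Incidentally, your opening reduction to closedness of the range is never used, since the argument that follows proves surjectivity directly.
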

\begin{proof} 
Let
\[
Z=\mathrm{FBL}\langle L\rangle/\ker(\overline T)
\]
and let $Q:\mathrm{FBL}\langle L\rangle\to Z$ be the quotient map. There is
a Banach lattice homomorphism $S:Z\to\mathrm{FBL}\langle M\rangle$ such that
\[
\overline T=SQ.
\]

Since $T$ is surjective, define $R:M\to Z$ by
\[
R(Tx)=Q\delta_L(x)\qquad(x\in L).
\]
This is well defined: if $T(x)=T(y)$, then
\[
\overline T(\delta_L(x))=\delta_M(Tx)=\delta_M(Ty)=\overline T(\delta_L(y)),
\]
so $\delta_L(x)-\delta_L(y)\in\ker(\overline T)$ and
$Q\delta_L(x)=Q\delta_L(y)$. Thus $Q\delta_L=RT$.

Let $\widehat R:\mathrm{FBL}\langle M\rangle\to Z$ be the canonical
extension of $R$. For $y\in M$, choose $x\in L$ with $y=Tx$. Then
\[
S\widehat R\delta_M(y)=SR(y)=SRT(x)=SQ\delta_L(x)
=\overline T\delta_L(x)=\delta_M(y).
\]
Thus $S\widehat R$ is the identity on the range of $\delta_M$, and hence on
the sublattice generated by it. Since this sublattice is dense in $\FBL M$,
$S\widehat R=\Id_{\FBL M}$. Therefore $S$ is surjective, and consequently
$\overline T=SQ$ is surjective.
\end{proof}

\begin{remark}
A modification of \cite[Example~4.4]{AMCRARZ2022b} shows that the converse to
Theorem~\ref{thm:onto} fails in general. Let
$L=\{0,a,b,1\}$ with $a\wedge b=0$ and $a\vee b=1$, and let
$M=\{0,a,1\}$ be a proper sublattice. The inclusion
$\iota:M\to L$ is not surjective. We claim, however, that
$\overline\iota:\FBL M\to\FBL L$ is onto. First observe that
$\iota^*:L^*\to M^*$, given by $\iota^*x^*=x^*\circ\iota$, is injective.
Indeed, for every $x^*\in L^*$ we have
$\{x^*(0),x^*(1)\}=\{x^*(a),x^*(b)\}$. If
$\iota^*x_1^*=\iota^*x_2^*$, then
    $$
\{x_1^*(a),x_1^*(b)\}
=\{x_1^*(0),x_1^*(1)\}
=\{x_2^*(0),x_2^*(1)\}
=\{x_2^*(a),x_2^*(b)\}.
$$
It follows that $x_1^*(b)=x_2^*(b)$, and hence $x_1^*=x_2^*$. Thus
$\iota^*$ is injective. Since $\overline\iota$ is the composition operator
associated with $\iota^*$, surjectivity of $\overline\iota$ follows.
\end{remark}

\begin{remark}
For a lattice homomorphism $T:M\to L$, Theorem~\ref{thm:onto} shows that
surjectivity of $\overline T$ is equivalent to surjectivity of
$\overline\iota$, where $\iota:T(M)\hookrightarrow L$ is the inclusion. If
$L$ has a maximum $M_L$ and a minimum $m_L$, with $m_L,M_L\in T(M)$, then
\cite[Theorem~2.7]{AMCRARZ2022a} and standard facts on composition operators between
$C(K)$ spaces show that surjectivity of $\overline T$ is equivalent to
injectivity of $\iota^*:K_L\to K_M$.
\end{remark}

\subsection{General lattice homomorphisms on $\FBL L$}

For free Banach lattices over Banach spaces, lattice homomorphisms
$\mathrm{FBL}[E]\to\mathrm{FBL}[F]$ can be represented through maps
$F^*\to E^*$ and a composition formula \cite[Section~10]{OTTT2024}. We now
record the analogous construction for $\FBL L$. Given a lattice homomorphism
$T:\FBL L\to\FBL M$, we associate a map $\Phi_T:M^*\to\mathbb R^L$ and
study its basic properties.

Let $L$ and $M$ be distributive lattices, and let
$T:\mathrm{FBL}\langle L\rangle\to\mathrm{FBL}\langle M\rangle$ be a
lattice homomorphism. For $x^*\in M^*$, define
$\Phi_T(x^*):L\to\mathbb R$ by
\begin{equation}\label{eq:phi_T}
\Phi_T(x^*)(x)=(T\delta_x)(x^*)\qquad(x\in L).
\end{equation}

\begin{proposition}
\label{prop:PhiT_algebraic}
Let $L$ and $M$ be lattices, and let
\[
T:\mathrm{FBL}\langle L\rangle\to\mathrm{FBL}\langle M\rangle
\]
be a lattice homomorphism. Let $\Phi_T:M^*\to\mathbb R^L$ be defined by
\eqref{eq:phi_T}. Then:
\begin{enumerate}
\item For every $x^*\in M^*$, the map $\Phi_T(x^*):L\to\mathbb R$ is a
lattice homomorphism.
\item For every $x^*\in M^*$ and every $x\in L$,
\[
|(\Phi_T(x^*))(x)|\le\|T\|.
\]
In particular, if $\|T\|\le1$, then $\Phi_T(M^*)\subset L^*$.
\item The map $\Phi_T$ is positively homogeneous: if $x^*\in M^*$ and
$\lambda\ge0$ are such that $\lambda x^*\in M^*$, then
\[
\Phi_T(\lambda x^*)=\lambda\Phi_T(x^*).
\]
\item If $\|T\|\le1$, then for every $f\in\FBL L$ and every $x^*\in M^*$,
\[
(Tf)(x^*)=f(\Phi_T(x^*)).
\]
\end{enumerate}
\end{proposition}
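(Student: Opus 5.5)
We verify the four items in order, using the functional representation of $\FBL L$ and $\FBL M$ as lattices of positively homogeneous functions on $L^*$ and $M^*$ with pointwise operations. Point evaluations $f\mapsto f(x^*)$, for $x^*\in M^*$, are then real-valued lattice homomorphisms on $\FBL M$.

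For (1), fix $x^*\in M^*$. Since $\delta_L$ is a lattice homomorphism, $\delta_{x\vee y}=\delta_x\vee\delta_y$. Since $T$ is a lattice homomorphism, $T\delta_{x\vee y}=T\delta_x\vee T\delta_y$. Evaluating at $x^*$ gives $\Phi_T(x^*)(x\vee y)=\Phi_T(x^*)(x)\vee\Phi_T(x^*)(y)$, and meets are handled in the same way.

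For (2), the norm \eqref{eq:FBL-L-norm} with $m=1$ gives $|g(x^*)|\le\|g\|$ for every $g\in\FBL M$ and $x^*\in M^*$, because $\sup_{y}|x^*(y)|\le1$. Hence $|\Phi_T(x^*)(x)|\le\|T\delta_x\|\le\|T\|\,\|\delta_x\|\le\|T\|$. When $\|T\|\le1$, $\Phi_T(x^*)$ is therefore a lattice homomorphism into $[-1,1]$, that is, an element of $L^*$.

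For (3), each $T\delta_x$ belongs to $\FBL M$, so it is positively homogeneous on $M^*$. This gives $\Phi_T(\lambda x^*)(x)=(T\delta_x)(\lambda x^*)=\lambda(T\delta_x)(x^*)$ for every $x$.

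Item (4) is the main step. Assume $\|T\|\le1$ and fix $x^*\in M^*$. By (2), $\phi:=\Phi_T(x^*)\in L^*$. Consider the two maps $f\mapsto (Tf)(x^*)$ and $f\mapsto f(\phi)$ on $\FBL L$. Both are linear and lattice homomorphisms into $\mathbb R$: the first is $T$ followed by a point evaluation on $\FBL M$, and the second is a point evaluation on $\FBL L$. By the definition of $\Phi_T$ they agree on every $\delta_x$. Hence they agree on $lat\{\delta_x:x\in L\}$, because lattice-linear expressions are preserved by both maps. Both maps are also norm-continuous, since $|g(\psi)|\le\|g\|$ for point evaluations and $T$ is bounded. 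By density of $lat\{\delta_x\}$ they therefore agree on all of $\FBL L$.

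The only delicate point is this continuity of point evaluations, which (2) already provides. If one prefers to avoid the explicit density argument, the same conclusion follows from the uniqueness in the universal property applied to $X=\mathbb R$ and the bounded lattice homomorphism $x\mapsto\phi(x)$.
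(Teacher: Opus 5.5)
Your proof is correct and follows essentially the same route as the paper: you check items (1)--(3) pointwise through $T\delta_x$, and you prove (4) by showing the two maps agree on each $\delta_x$, hence on $\mathrm{FVL}\langle L\rangle$, and then extending by density and continuity using $\Phi_T(x^*)\in L^*$. Your explicit observation that $|g(\psi)|\le\|g\|$ for point evaluations (take $m=1$ in the norm) fills in a detail the paper leaves implicit.
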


\begin{proof}
(1) Fix $x^*\in M^*$. For $x,y\in L$,
\[
(\Phi_T(x^*))(x\vee y)
=(T\delta_{x\vee y})(x^*)
=(T(\delta_x\vee\delta_y))(x^*)
=\max\{(T\delta_x)(x^*),(T\delta_y)(x^*)\}.
\]
Thus $\Phi_T(x^*)$ preserves joins. The proof for meets is analogous.

\medskip

(2) For $x^*\in M^*$ and $x\in L$,
\[
|(\Phi_T(x^*))(x)|=|(T\delta_x)(x^*)|
\le\|T\delta_x\|\le\|T\|.
\]

\medskip

(3) If $x^*\in M^*$ and $\lambda\ge0$ are such that
$\lambda x^*\in M^*$, then, for $x\in L$,
\[
(\Phi_T(\lambda x^*))(x)
=(T\delta_x)(\lambda x^*)
=\lambda(T\delta_x)(x^*)
=\lambda(\Phi_T(x^*))(x).
\]
(4) Assume that $\|T\|\le1$. Then $\Phi_T(x^*)\in L^*$ by (2). Let
$g\in\mathrm{FVL}\langle L\rangle$. We first prove that
\[
(Tg)(x^*)=g(\Phi_T(x^*))\qquad(x^*\in M^*).
\]

For $g=\delta_x$, this follows from
\[
(T\delta_x)(x^*)=(\Phi_T(x^*))(x)=\delta_x(\Phi_T(x^*)).
\]
Since both sides are preserved under linear combinations and lattice
operations, the identity holds for every $g\in\mathrm{FVL}\langle L\rangle$.

Let $f\in\mathrm{FBL}\langle L\rangle$, and choose
$(g_n)\subset\mathrm{FVL}\langle L\rangle$ with $g_n\to f$ in norm. Since
$T$ is bounded and evaluation at $x^*\in M^*$ is continuous,
\[
(Tf)(x^*)=\lim_{n\to\infty}(Tg_n)(x^*)
=\lim_{n\to\infty}g_n(\Phi_T(x^*)).
\]

Since $\Phi_T(x^*)\in L^*$ and evaluation at $\Phi_T(x^*)$ is continuous,
\[
\lim_{n\to\infty}g_n(\Phi_T(x^*))=f(\Phi_T(x^*)).
\]
This proves the formula.
\end{proof}

\begin{proposition}
\label{prop:PhiT_topological}
Let $L$ and $M$ be lattices, and let
\[
T:\mathrm{FBL}\langle L\rangle\to\mathrm{FBL}\langle M\rangle
\]
be a lattice homomorphism. Then the associated map
$\Phi_T:M^*\to\mathbb R^L$ has the following properties.
\begin{enumerate}
\item The map $\Phi_T$ is continuous when $M^*$ and $\mathbb R^L$ are
endowed with the product topologies.
\item If $T$ has dense range and $\|T\|\le1$, then $\Phi_T$ is injective.
\item If $T$ is an isometric lattice isomorphism, then $\Phi_T$ is bijective and
\[
\Phi_{T^{-1}}=(\Phi_T)^{-1}.
\]
\item For every $x_1^*,\dots,x_m^*\in M^*$,
\[
\sup_{y\in L}\sum_{i=1}^m|\Phi_T(x_i^*)(y)|
\le \|T\|\sup_{x\in M}\sum_{i=1}^m|x_i^*(x)|.
\]
\end{enumerate}
\end{proposition}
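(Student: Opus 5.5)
I would prove the four items separately, using Proposition~\ref{prop:PhiT_algebraic} throughout. Each item reduces to the formula $(Tf)(x^*)=f(\Phi_T(x^*))$, which holds when $\|T\|\le1$, or to the coordinatewise definition \eqref{eq:phi_T}.

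\emph{Item (1).} Continuity into $\mathbb R^L$ with the product topology means that each coordinate $x^*\mapsto\Phi_T(x^*)(x)=(T\delta_x)(x^*)$ is continuous on $M^*$. I would argue this as follows. The space $\FBL M$ consists of functions that are continuous on $M^*$ for the product topology; this follows from the identification with $C_{ph}(M^*)$ cited in Theorem~\ref{thm:no proj bands}. Alternatively, every element of $lat\{\delta_y:y\in M\}$ is continuous, and norm limits are uniform limits on $M^*$, because $\|g\|\ge\sup_{x^*\in M^*}|g(x^*)|$ (take $m=1$ in \eqref{eq:FBL-L-norm}). Since $T\delta_x\in\FBL M$, the coordinate map is continuous.

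\emph{Item (4).} Fix $x_1^*,\dots,x_m^*\in M^*$ and $y\in L$. I would write
\[
\sum_i|\Phi_T(x_i^*)(y)|=\sum_i|(T\delta_y)(x_i^*)|.
\]
If $c:=\sup_{x\in M}\sum_i|x_i^*(x)|>0$, then the functionals $x_i^*/c$ form an admissible family in \eqref{eq:FBL-L-norm}. They lie in $M^*$ when $c\ge1$. When $c<1$, I would use positive homogeneity, which is allowed because scaling by $1/c$ keeps values in $[-1,1]$ since each $|x_i^*(x)|\le c$. By positive homogeneity of $T\delta_y$, this gives $\sum_i|(T\delta_y)(x_i^*)|\le c\,\|T\delta_y\|\le c\|T\|\|\delta_y\|\le c\|T\|$. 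If $c=0$, every $x_i^*=0$, so both sides vanish. Taking the supremum over $y$ gives (4).

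\emph{Item (2).} Suppose $\Phi_T(x_1^*)=\Phi_T(x_2^*)$. By Proposition~\ref{prop:PhiT_algebraic}(4), $(Tf)(x_1^*)=(Tf)(x_2^*)$ for all $f\in\FBL L$. Point evaluations are norm continuous on $\FBL M$, and $T$ has dense range, so $g(x_1^*)=g(x_2^*)$ for all $g\in\FBL M$. Taking $g=\delta_y$ gives $x_1^*(y)=x_2^*(y)$ for all $y$, hence $x_1^*=x_2^*$.

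\emph{Item (3).} Here $T$ and $T^{-1}$ both have norm $1$. Item (2) of Proposition~\ref{prop:PhiT_algebraic} then gives $\Phi_T:M^*\to L^*$ and $\Phi_{T^{-1}}:L^*\to M^*$. For $\xi\in L^*$ and $x\in L$, Proposition~\ref{prop:PhiT_algebraic}(4) applied to $T$ at the point $\Phi_{T^{-1}}(\xi)$ gives
\[
\Phi_T(\Phi_{T^{-1}}\xi)(x)=(T\delta_x)(\Phi_{T^{-1}}\xi).
\]
Applying part (4) to $T^{-1}$ with $f=T\delta_x$ gives
\[
(T\delta_x)(\Phi_{T^{-1}}\xi)=(T^{-1}T\delta_x)(\xi)=\xi(x).
\]
Hence $\Phi_T\circ\Phi_{T^{-1}}=\Id_{L^*}$, and by symmetry $\Phi_{T^{-1}}\circ\Phi_T=\Id_{M^*}$. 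Therefore $\Phi_T$ is bijective with inverse $\Phi_{T^{-1}}$.

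The main obstacle is the rescaling step in (4). It needs care in checking that $x_i^*/c$ lies in $M^*$ or is handled by homogeneity, and that no normalisation is lost. The rest is bookkeeping with evaluation functionals.
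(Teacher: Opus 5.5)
Your proof is correct and follows the paper's argument item by item: continuity of each coordinate because $T\delta_x\in C(M^*)$, injectivity via dense range and Proposition~\ref{prop:PhiT_algebraic}(4), the identity $\Phi_T\circ\Phi_{T^{-1}}=\Id$ (and its mirror) from Proposition~\ref{prop:PhiT_algebraic}(4) applied to $T$ and $T^{-1}$, and the norm inequality from the definition of the free norm. Your rescaling in (4) spells out what the paper compresses into ``the definition of the free norm gives''; the case split $c\ge1$ versus $c<1$ is unnecessary, because $|x_i^*(x)|\le c$ already gives $x_i^*/c\in M^*$ for every $c>0$.
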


\begin{proof}
(1) For each $x\in L$, the coordinate map
$x^*\mapsto(\Phi_T(x^*))(x)=(T\delta_x)(x^*)$ is continuous on $M^*$, since
$T\delta_x\in\mathrm{FBL}\langle M\rangle\subset C(M^*)$.

\medskip

(2) Assume that $T$ has dense range and $\|T\|\le1$. Let
$x_1^*,x_2^*\in M^*$ with $x_1^*\ne x_2^*$. Since
$\mathrm{FBL}\langle M\rangle$ separates points of $M^*$, choose
$h\in\mathrm{FBL}\langle M\rangle$ with $h(x_1^*)\ne h(x_2^*)$. By density
of the range of $T$, there is $f\in\mathrm{FBL}\langle L\rangle$ such that
$(Tf)(x_1^*)\ne(Tf)(x_2^*)$. Proposition~\ref{prop:PhiT_algebraic} gives
\[
f(\Phi_T(x_1^*))\ne f(\Phi_T(x_2^*)),
\]
so $\Phi_T(x_1^*)\ne\Phi_T(x_2^*)$.

\medskip

(3) Since $\|T\|=\|T^{-1}\|=1$, Proposition~\ref{prop:PhiT_algebraic}
shows that $\Phi_T(M^*)\subset L^*$ and
$\Phi_{T^{-1}}(L^*)\subset M^*$.

For $x^*\in M^*$ and $y\in M$, Proposition~\ref{prop:PhiT_algebraic}
applied to $T^{-1}$ gives
\[
(T^{-1}\delta_y)(\Phi_T(x^*))
=\delta_y(\Phi_{T^{-1}}(\Phi_T(x^*))).
\]
Since $T(T^{-1}\delta_y)=\delta_y$, the left-hand side equals
$\delta_y(x^*)=x^*(y)$. Hence
$\Phi_{T^{-1}}\circ\Phi_T=\Id_{M^*}$.

The reverse identity is obtained in the same way, so $\Phi_T$ is bijective
with inverse $\Phi_{T^{-1}}$.

(4) Fix $y\in L$. By the definition of $\Phi_T$,
\[
\sum_{i=1}^m|\Phi_T(x_i^*)(y)|
=\sum_{i=1}^m|(T\delta_y)(x_i^*)|.
\]
The definition of the free norm gives
\[
\sum_{i=1}^m|(T\delta_y)(x_i^*)|
\le \|T\delta_y\|\sup_{x\in M}\sum_{i=1}^m|x_i^*(x)|.
\]
Since $\|T\delta_y\|\le\|T\|$, it follows that
\[
\sum_{i=1}^m|\Phi_T(x_i^*)(y)|
\le\|T\|\sup_{x\in M}\sum_{i=1}^m|x_i^*(x)|.
\]
Taking the supremum over $y\in L$ yields the result.
\end{proof}

\section{Lattice isometries on $\FBL L$}

If two lattices $L$ and $M$ are lattice isomorphic, then $\FBL L$ and
$\FBL M$ are Banach lattice isometric. The converse does not hold in
general. Recall that the \emph{opposite lattice} $L^{\mathrm{op}}$ has the
same underlying set as $L$, with the order reversed: $x\le y$ in
$L^{\mathrm{op}}$ precisely when $y\le x$ in $L$. Thus
\[
x\vee_{L^{\mathrm{op}}}y=x\wedge_L y,
\qquad
x\wedge_{L^{\mathrm{op}}}y=x\vee_L y.
\]

\begin{lemma}\label{lem:dual-correspondence}
Let $L$ be a lattice and let $L^{\mathrm{op}}$ be its opposite lattice.
Then
\[
x^*\in L^*\quad\text{if and only if}\quad -x^*\in(L^{\mathrm{op}})^*.
\]
Consequently, the map $x^*\mapsto -x^*$ is a bijection from $L^*$ onto
$(L^{\mathrm{op}})^*$.
\end{lemma}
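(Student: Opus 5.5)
The plan is a direct verification from the definitions. We show that if $x^*\in L^*$, then $-x^*\in(L^{\mathrm{op}})^*$. Applied to $L^{\mathrm{op}}$ in place of $L$, and using $(L^{\mathrm{op}})^{\mathrm{op}}=L$ and $-(-x^*)=x^*$, the same implication gives the converse. The bijection statement then follows, since $x^*\mapsto -x^*$ is an involution of $\mathbb R^L$ that maps $L^*$ into $(L^{\mathrm{op}})^*$ and $(L^{\mathrm{op}})^*$ into $L^*$.

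For the forward implication, fix $x^*\in L^*$, that is, a lattice homomorphism $L\to[-1,1]$. Since $|{-x^*(x)}|=|x^*(x)|\le 1$, the function $-x^*$ takes values in $[-1,1]$. It remains to check that $-x^*$ is a lattice homomorphism from $L^{\mathrm{op}}$ into $[-1,1]$, with the usual order on $[-1,1]$. The key elementary identities are $-\max\{s,t\}=\min\{-s,-t\}$ and $-\min\{s,t\}=\max\{-s,-t\}$ for real numbers $s,t$. Combining these with $x\vee_{L^{\mathrm{op}}}y=x\wedge_L y$, for $x,y\in L$ we obtain
\[
(-x^*)(x\vee_{L^{\mathrm{op}}}y)=-x^*(x\wedge_L y)=-\min\{x^*(x),x^*(y)\}=\max\{-x^*(x),-x^*(y)\}.
\]
The analogous computation handles meets.

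There is no real obstacle here. The only point needing care is the convention: $L^*$ is defined using homomorphisms into $[-1,1]$ with its standard order, not with the reversed order. The sign change is exactly what converts the order reversal on the domain into the standard order on the target, because $t\mapsto -t$ is an order anti-automorphism of $[-1,1]$. I will state this explicitly, as it is what later lets $f\mapsto f(-\,\cdot\,)$ intertwine $\delta_x$ on $L$ with $-\delta_x$ on $L^{\mathrm{op}}$. Distributivity plays no role in this lemma.
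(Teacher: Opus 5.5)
Your proposal is correct and follows essentially the same route as the paper's proof: check that negation turns joins and meets in $L^{\mathrm{op}}$ (meets and joins in $L$) into joins and meets in $[-1,1]$, get the converse by applying this to $L^{\mathrm{op}}$, and deduce bijectivity from the involution property. Your explicit check that $-x^*$ still takes values in $[-1,1]$ is a small detail the paper leaves implicit.
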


\begin{proof}
Suppose that $x^*\in L^*$. We show that $-x^*:L^{\mathrm{op}}\to[-1,1]$
is a lattice homomorphism. If $a\le_{L^{\mathrm{op}}}b$, then
$b\le_L a$, so $x^*(b)\le x^*(a)$ and
\[
-x^*(a)\le -x^*(b).
\]
Thus $-x^*$ is order preserving on $L^{\mathrm{op}}$.

For joins in $L^{\mathrm{op}}$,
\begin{align*}
(-x^*)(a\vee_{L^{\mathrm{op}}}b)
&=-x^*(a\wedge_L b)\\
&=-\min\{x^*(a),x^*(b)\}\\
&=\max\{-x^*(a),-x^*(b)\}\\
&=(-x^*)(a)\vee(-x^*)(b).
\end{align*}

Similarly, for meets in $L^{\mathrm{op}}$,
\begin{align*}
(-x^*)(a\wedge_{L^{\mathrm{op}}}b)
&=-x^*(a\vee_L b)\\
&=-\max\{x^*(a),x^*(b)\}\\
&=\min\{-x^*(a),-x^*(b)\}\\
&=(-x^*)(a)\wedge(-x^*)(b).
\end{align*}

Thus $-x^*\in(L^{\mathrm{op}})^*$. The converse follows by applying the same
argument to $L^{\mathrm{op}}$. The map $x^*\mapsto -x^*$ is its own inverse,
so it is a bijection.
\end{proof}
\begin{proposition}\label{prop:dual-isometry}
For every lattice $L$, the Banach lattices $\FBL L$ and
$\FBL{L^{\mathrm{op}}}$ are lattice isometric.
\end{proposition}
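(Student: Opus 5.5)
The natural candidate is the map $\Psi:\FBL L\to\FBL{L^{\mathrm{op}}}$ given by
\[
(\Psi f)(y^*)=-f(-y^*)\qquad (y^*\in(L^{\mathrm{op}})^*),
\]
which makes sense by Lemma~\ref{lem:dual-correspondence}, since $-y^*\in L^*$. On generators it gives $(\Psi\delta_x)(y^*)=-(-y^*(x))=y^*(x)=\delta^{\mathrm{op}}_x(y^*)$. So $\Psi$ sends $\delta_L(x)$ to $\delta_{L^{\mathrm{op}}}(x)$ as a composition-with-sign operator. However, $f\mapsto -f\circ(-\,\cdot)$ reverses the lattice operations: $\Psi(f\vee g)=\Psi f\wedge\Psi g$. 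It is therefore not a lattice homomorphism, and I have to correct for this.

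Instead I will use $\Phi f(y^*)=f(-y^*)$. This is a composition operator, so it preserves linear and pointwise lattice operations. It sends $\delta_x$ to $-\delta^{\mathrm{op}}_x$. The plan is as follows.

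First, show that $\Phi$ maps positively homogeneous functions on $L^*$ to positively homogeneous functions on $(L^{\mathrm{op}})^*$. This is clear, since $y^*\mapsto -y^*$ commutes with multiplication by $\lambda\ge0$ and the domains correspond bijectively.

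Second, show that $\Phi$ is isometric for the norm \eqref{eq:FBL-L-norm}. The families $x_1^*,\dots,x_m^*\in L^*$ with $\sup_x\sum|x_i^*(x)|\le1$ correspond bijectively, via negation, to admissible families in $(L^{\mathrm{op}})^*$, because $L$ and $L^{\mathrm{op}}$ have the same underlying set and absolute values are unchanged. Also $|f(x_i^*)|=|\Phi f(-x_i^*)|$, so the suprema coincide.

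Third, identify the range. $\Phi$ is a linear lattice isometry between the ambient spaces of positively homogeneous finite-norm functions, with inverse given by the same formula. It maps $\delta_x$ to $-\delta^{\mathrm{op}}_x$, and the closed vector sublattice generated by $\{-\delta^{\mathrm{op}}_x\}$ equals the one generated by $\{\delta^{\mathrm{op}}_x\}$. Hence $\Phi$ carries the closed sublattice generated by $\{\delta_x:x\in L\}$, which is $\FBL L$ in the functional representation, onto the closed sublattice generated by $\{\delta^{\mathrm{op}}_x:x\in L\}$, which is $\FBL{L^{\mathrm{op}}}$. Here I use that a lattice isometry maps generated closed sublattices to generated closed sublattices.

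The main subtlety is the non-distributive case, since the functional representation is only stated for distributive $L$. I would handle it by replacing $L$ with $\delta(L)$ as in \cite{ARA2019}, noting that $\delta(L)^{\mathrm{op}}=\delta^{\mathrm{op}}(L^{\mathrm{op}})$ up to the obvious identification. Alternatively, I would verify the statement through the universal property directly. The map $x\mapsto-\delta_L(x)$ is a bounded lattice homomorphism from $L^{\mathrm{op}}$ into $\FBL L$, because negation swaps $\vee$ and $\wedge$. This yields $\widehat{T}:\FBL{L^{\mathrm{op}}}\to\FBL L$ of norm at most $1$. The symmetric construction gives a map back, and the two compositions fix the generators, so each is the identity and both maps are lattice isometries. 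This universal-property route is cleaner and avoids distributivity altogether, so I would make it the main argument and keep the functional computation as an explicit description.
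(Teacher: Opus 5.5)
Your proposal is correct, and your main argument takes a different route from the paper's. Your secondary, functional argument is exactly the paper's proof. The paper takes the bijection $S(x^*)=-x^*$ from $(L^{\mathrm{op}})^*$ onto $L^*$ given by Lemma~\ref{lem:dual-correspondence} and sets $\Phi f=f\circ S$. It then notes that $\Phi$ preserves the pointwise operations and that the norm \eqref{eq:FBL-L-norm} is invariant under $x^*\mapsto -x^*$. Your extra observation $\Phi\delta_x=-\delta^{\mathrm{op}}_x$ shows that $\Phi$ carries $\FBL L$ onto the closed sublattice generated by the $\delta^{\mathrm{op}}_x$, which is $\FBL{L^{\mathrm{op}}}$. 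The paper leaves this range identification implicit, so your version is slightly more complete.

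Your main argument instead uses only the universal property. Because negation swaps $\vee$ and $\wedge$, the map $x\mapsto-\delta_L(x)$ is a lattice homomorphism $L^{\mathrm{op}}\to\FBL L$ of norm at most $1$, and symmetrically in the other direction. The two induced Banach lattice homomorphisms are therefore contractive, and their composites fix the generators. So both composites are the identity, and both maps are lattice isometries.

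This route avoids the functional representation and the norm computation entirely. The isometry comes for free from the norm clause of the universal property. It also applies verbatim to non-distributive lattices, so it literally proves the statement for every lattice $L$. The paper's proof, by contrast, relies on the distributive setting fixed in Section~2 together with the reduction of $L$ to $\delta(L)$. Your remark about that reduction commuting with taking opposites would need justification, but it becomes unnecessary once you adopt the universal-property argument.

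What the paper's route buys is the explicit formula $f\mapsto f\circ S$ on $L^*$, which is convenient when transferring pointwise information, as in the minimum case of Theorem~\ref{thm:strongunit}. The two approaches produce the same map. Your map from $\FBL L$ to $\FBL{L^{\mathrm{op}}}$ sends $\delta_x$ to $-\delta^{\mathrm{op}}_x$ and so coincides with the paper's $\Phi$. Its inverse, which sends $\delta^{\mathrm{op}}_x$ to $-\delta_x$, is $g\mapsto g\circ S^{-1}$.
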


\begin{proof}
Let $S:(L^{\mathrm{op}})^*\to L^*$ be given by $S(x^*)=-x^*$, and define
\[
\Phi f=f\circ S\qquad(f\in\FBL L).
\]
By Lemma~\ref{lem:dual-correspondence}, $S$ is a bijection. It is immediate
that $\Phi$ preserves linear and lattice operations. Moreover, the defining
norm formula for free Banach lattices is invariant under the change of
variables $x^*\mapsto -x^*$, and therefore $\|\Phi f\|=\|f\|$. Since
$\Phi^{-1}$ is defined in the same way, $\Phi$ is a lattice isometry.
\end{proof}
\begin{remark}
There are lattices that are not isomorphic to their opposite lattices, while
their free Banach lattices are lattice isometric by
Proposition~\ref{prop:dual-isometry}. For instance, one may take a five-point
lattice $L=\{0,1,a,b,c\}$ with $0<a=b\wedge c<1$ and $b\vee c=1$.
\end{remark}
Motivated by \cite[\S~10.3]{OTTT2024}, it is natural to ask for conditions on a
lattice $L$ under which lattice isometry of $\FBL M$ and $\FBL L$ forces
$M$ to be lattice isomorphic to either $L$ or $L^{\mathrm{op}}$.

\section*{Acknowledgements}

This work was supported by the CSIC cooperation grant COOPB25033 under the
i-COOP program. P. Tradacete was also partially supported by grants
PID2024-162214NB-I00 and CEX2023-001347-S, funded by Agencia Estatal de
Investigaci\'on (Spain) MCIN/AEI/10.13039/501100011033.

\end{document}